\newtheorem{theorem}{Theorem}
\newtheorem{lemma}{Lemma}
\newtheorem{proposition}{Proposition}
\newtheorem{corollary}{Corollary}
\theoremstyle{definition}
\newtheorem{definition}{Definition}
\theoremstyle{remark}
\newtheorem{remark}{Remark}
\def\R{\mathbf{R}}
\def\C{\mathbf{C}}
\def\N{\mathbf{N}}
\def\f{\mathscr{F}}
\def\fc{\mathscr{C}}
\def\fcc{\mathscr{K}}
\def\fpd{\mathcal{P}}
\def\D{\mathscr{D}}
\def\FSs{\mathscr{B}}
\def\Fs{\mathscr{A}}
\def\H{\mathscr{H}}
\def\Nn{\mathscr{N}}
\def\supp{\mathrm{supp}\,}
\def\spec{\mathrm{Spec}\,}
\def\id{\mathrm{id}}
\def\meas{\mathscr{M}}
\def\comps{\mathcal{K}}
\def\ucomps#1#2{\mathcal{U}_{#1}(#2)}
\def\var{\mathrm{Var\,}}
\def\act{.\,}
\def\pair#1#2{\langle\, #1,  #2\,\rangle}
\def\bpair#1#2{\Bigl\langle\, #1,  #2\,\Bigr\rangle}
\def\inner#1#2{\bigl(\,#1\,\bigm|\,#2\,\bigr)}
\def\Mat{M}
\begin{document}

\title[On Fourier algebra of a hypergroup $\ldots $]
{On Fourier algebra of a hypergroup constructed from a conditional
expectation on a locally compact group}

%    Information for first author
\author{A. A. Kalyuzhnyi}
\address{Institute of Mathematics, National Academy of Sciences of Ukraine, 3 Tereshchenkivs'ka,
Kyiv, 01601, Ukraine}
%\curraddr{}
\email{akalyuz@gmail.com}
%\thanks{Thank you.}

%    Information for second author
\author{G. B. Podkolzin}
\address{National Technical University of Ukraine "Kyiv Polytechnic Institute",
37 Prospect Pe\-re\-mogy, Kyiv, 03056, Ukraine}
\email{glebpodkolzin@gmail.com}
%\thanks{Thank you very much.}

%    Information for third author
\author{Yu. A. Chapovsky}
\address{National Technical University of Ukraine "Kyiv Polytechnic Institute",
37 Prospect Pe\-re\-mogy, Kyiv, 03056, Ukraine}
%\curraddr{}
\email{yury\_chapovsky@mail.ru}
%\thanks{Thank you.}

%    General info
\subjclass[2000]{43A62}
\date{13/11/2016; Revised 28/12/2016}
%\dedicatory{This paper is dedicated to you.}
\keywords{Hypergroup, Fourier algebra, conditional expextation,
comultiplication.}

\begin{abstract}
 We prove that the Fourier space of a hypergroup constructed from a
  conditional expectation on a locally compact group has a Banach
  algebra structure.
\end{abstract}

\maketitle

\section{Introduction}
\label{sec:Introduction}

It is well known~\cite{eymard64:_lalgeb_fourier} that, for a locally
compact group $G$, the Fourier-Stieltjes space $\FSs(G)$ and the
Fourier space $\Fs(G)$, make commutative Banach algebras.

For a hypercomplex
system~\cite{berezansky_kal92:_harmon_analy_hyper_system} or a
DJS-hypergroup~\cite{bloom_heyer91:_harmon_analy}, one can also define
the Fourier-Stieltjes and the Fourier spaces. It is well known that,
being Banach spaces, they do not, in general, make Banach algebras.
Conditions for this to be the case were considered
in~\cite{muruganandam07:_fourier_algeb,muruganandam08:_fourier_algeb}
for commutative hypergroups and special type hypergroups called
ultraspherical. In~\cite{ChKP2015}, a general sufficient condition for
the Fourier and the Fourier-Stieltjes spaces of a locally compact
hypergroup to be Banach algebras was obtained. It was also proved
there that there exists the Fourier Banach algebra on the double coset
hypergroup constructed from a locally compact group.

In this paper we prove that the Fourier space of a DJS-hypergroup that
is constructed from a locally compact group and an expectation, see
\cite{KCondEx, ChKP2011}, also
has a Banach algebra structure. This result is a generalization of the
one obtained in~\cite{muruganandam08:_fourier_algeb}, where the author
imposes additional conditions on the hypergroup.

The paper is organized as follows. In Section~\ref{prelim}, we recall
a traditional definition of a DJS-hypergroup $Q$ in terms of the
convolution algebra structure on bounded measures on $Q$. Then,
in~Propositions~\ref{P:hypergroup->C^*-algebra}
and~\ref{P:C^*-algebra->hypergroup}, we give an equivalent form for a
definition of a DJS-hypergroup in a dual setting in terms of a
comultiplication on the $C^*$-algebra of continuous functions on $Q$
vanishing at infinity and its multiplier algebra of continuous bounded
functions on $Q$. This turns out to be more convenient for purposes of
this paper.

Section~\ref{sec:main-results} gives a main result of the paper. In
Theorem~\ref{Th:hypergroup_from_expectation}, we describe construction
of a DJS-hypergroup from a conditional expectation on a locally
compact group. It follows from proofs that the construction of a
conditional expectation is similar to the construction of orbital
morphisms~\cite{Jewett}, except that the corresponding orbital mapping
need not be open.  Using results from~\cite{ChKP2015} we prove in
Theorem~\ref{T:complete-positivity} the main result that the Fourier
space of the hypergroup constructed from a conditional expectation on
a locally compact hypergroup is a Banach algebra.

\section{Preliminary}
\label{prelim}
%\subsection{Main notations and definitions}
%\label{sec:DefsNots}

\subsection{Hypergroups}\label{hypergroups}

Let $Q$ be a Hausdorff locally compact topo\-logical space.

\smallskip

The linear space of complex-valued continuous functions
(resp., bounded continuous functions, continuous functions approaching
zero at infinity) is denoted by $\fc(Q)$ (resp., $\fc_b(Q)$,
$\fc_0(Q)$).  The space $\fc_b(Q)$ is endowed with the norm
$\|f\|_\infty=\sup_{t\in Q} |f(t)|$.  Support of a function $f\in
\fc(Q)$ is denoted by $\supp(f)$. For an open subset $U$ of $Q$,
$\fcc(U)$ denotes the subspace of functions $f\in \fc(Q)$ such
that
$\supp(f) \subset U$, and $\fcc_+(U)$ (resp., $\fcc_+^*(U)$) denotes
the subset of functions $f\in \fcc(U)$ such that
$f\geq 0$ (resp., $f>0$). For
$p\in U$, we denote by $\fcc_+^p(U)$ the subset of $\fcc_+(U)$ such
that $f(p)>0$. By $1_Q$, we denote the constant function, $1_Q(s)=1$
for all $s \in Q$.

\smallskip A measure is understood as a complex Radon
measure~\cite{bourbaki04:_integ_I} on $Q$. The linear space of complex
Radon measures, over the field $\Bbb C$ of complex numbers, is denoted by
$\meas(Q)$.  The subspace of $\meas(Q)$ of bounded (resp., compactly
supported) measures is denoted by $\meas_b(Q)$ (resp., $\meas_c(Q)$)
and, for $\mu\in \meas_b(Q)$, its norm is $\|\mu\| = \sup_{f\in
  \fcc(Q),\ \|f\|_\infty\leq 1} |\mu(f)|$. The subset of $\meas(Q)$ of
nonnegative (resp., probability) measures is denoted by $\meas_+(Q)$
(resp., $\meas_p(Q)$). For a measure $\mu \in \meas_+(Q)$, its support
is denoted by $\supp(\mu)$. If $\mu \in \meas_+(Q)\cap \meas_b(Q) $,
then $\|\mu\|=\mu(1_Q)$. The Dirac measure at a point $s\in Q$ is
denoted by $\varepsilon_s$. The integral of $f\in \fcc(F)$ with
respect to a measure $\mu \in \meas$ is denoted by
$\mu(f)=\pair{f}{\mu}=\int_F \pair{f}{\varepsilon_t}\, d\mu(t)=\int_F
f(t)\, d\mu(t)$.

\smallskip The set of all nonempty compact subsets of $Q$ is denoted
by $\comps$. We consider $\comps$ endowed with the Michael
topology~\cite{michael51:_topol}. Recall that this topology is
generated by the subbasis of all $\ucomps{V}{W}=\{F\in \comps:
F\subset W \ \hbox{and}\ F\cap V\neq \emptyset\}$ for open subsets $V$
and $W$ of $Q$.

\medskip

A (locally compact)  hypergroup is a locally compact Hausdorff
topological space $Q$ such that $\meas_b(Q)$ is endowed with a
multiplication, called convolution and denoted by~$*$, satisfying the
following conditions~\cite{bloom_heyer91:_harmon_analy}:
\begin{itemize}
\item [$(H_1)$] $(\meas_b(Q), *)$ is an associative algebra over $\Bbb C$.
\item [$(H_2)$] For all $s, t \in Q$, $\varepsilon_s * \varepsilon_t
  \in \meas_p(Q)$ and $\supp(\varepsilon_s*\varepsilon_t) $ is compact.
\item [$(H_3)$] The mapping $(s,t) \mapsto \varepsilon_s *
  \varepsilon_t$ of $Q\times Q$ into $\meas_p(Q)$ is continuous with
  respect to the weak topology $\sigma(\meas_p(Q), \fc_0(Q))$ on $\meas_p(Q)$.
\item [$(H_4)$] The mapping $(s,t)\mapsto
  \supp(\varepsilon_s*\varepsilon_t) $ of $Q\times Q$ into $\comps$ is
  continuous with respect to the Michael topology on $\comps$.
\item [$(H_5)$] There exists a (necessarily unique) element $e \in Q$
  such that $\varepsilon_e*\varepsilon_s=
  \varepsilon_s*\varepsilon_e=\varepsilon_s$ for all $s \in Q$.
\item [$(H_6)$] There exists a (necessarily unique) homeomorphism
  $s\mapsto \check s$ of $Q$ into $Q$ such that $\check{\check s}=s$
  and $(\varepsilon_s*\varepsilon_t)\,\check{}=\varepsilon_{\check t}*
  \varepsilon_{\check s}$, where $\check \mu$ denotes the image of the
  measure $\mu$ with respect to the homeomorphism $s\mapsto \check s$,
  i.e., $\pair{f}{\check \mu} = \pair{\check f}{\mu}$, where $\check f
  (s) = f(\check s)$.
\item [$(H_7)$] For $s,t \in Q$, $e \in
  \supp(\varepsilon_s*\varepsilon_t)$ if and only if $s=\check t$.
\end{itemize}

\iffalse
For $\mu \in \meas_b$ and $f \in \fcc$, the convolutions
$\mu*f$ and $f*\mu$ are defined by
\begin{equation*}
%  \label{eq:function_measure_convolution}
  (\mu*f)(t)=\pair{f}{\check \mu*\varepsilon_t},
  \qquad
  (f*\mu)(t)=\pair{f}{\varepsilon_t*\check\mu}.
\end{equation*}
We have~\cite[Theorems~1.3.2, 1.3.3]{spector75:_aperc} that
\begin{equation*}
%  \label{eq:function_measure_convolution_properties}
  \begin{split}
    \pair{\mu*f}{\sigma}=\pair{f}{\check \mu*\sigma},
    \qquad
    \pair{f*\mu}{\sigma}=\pair{f}{\sigma*\check \mu},\\
    (\mu*\nu)*f=\mu*(\nu*f),
    \qquad
    f*(\mu*\nu)=(f*\mu)*\nu
  \end{split}
\end{equation*}
for $f\in\fcc$ and $\mu,\nu,\sigma \in \meas_b$.
\fi

For a measure $\mu \in \meas(Q)$ and $h\in \fc(Q)$, the measure $h\mu$
is defined by $\pair{f}{h\mu}=\pair{fh}{\mu}$ for $f\in\fcc(Q)$; it is
clear that $h\mu \in \meas_c(Q)$ for $h \in \fcc(Q)$.

Everywhere in the sequel, we assume that the hypergroup possesses a
left invariant measure, denoted by $m$, which means that
\begin{equation*}
  \varepsilon_s*m=m
\end{equation*}
for all $s\in Q$. The Banach space $L_p(Q,m)$, $1\leq p\leq \infty$,
is denoted by $L_p(Q)$.

For $\mu\in \meas_b(Q)$, denote by $\mu^\star$ the bounded measure
defined by $\mu^\star(f)=\overline{\check\mu(\bar f)}$ for $f\in
\fcc(Q)$. It follows from the axiom $(H_6)$ of a hypergroup that
${}^\star$ is an involution on the algebra $(\meas_b(Q), *)$. It is well
known~\cite{bloom_heyer91:_harmon_analy} that $(\meas_b(Q),*,{}^\star)$
is an involutive Banach algebra.

We denote $L_1( m )=\{f m : f \in L_1(Q)\}$. It is well
known that $(L_1( m ),*,{}^\star)$ is a closed two-sided ideal of
$(\meas_b(Q),*,{}^\star)$.

Identifying each $f\in L_1(Q)$ with the measure $f m \in L_1( m )$
yields an involutive Banach algebra structure on $L_1(Q)$, denoted by
$(L_1(Q), *, {}^\star)$, where
\begin{eqnarray*}
  (f*g)(s)&=&\int_Q f(t) \pair{g}{\check \varepsilon_t*\varepsilon_t}\, dm(t),
%  \label{eq:L_1-convolution}
  \\
  f^\star(s)&=&\kappa^{-1}( s) \overline  f(\check s)
%  \label{eq:L_1-involution}
  ,
\end{eqnarray*}
where $\kappa: Q \rightarrow \R$ is the modular function,
$m*\varepsilon_s=\Delta(s) m$, see~\cite{bloom_heyer91:_harmon_analy}. Also denote by
\begin{equation*}%\label{dagger}
f^\dagger (s) = \overline{f (\check s)} .
\end{equation*}
It is wellknown that a function $f\ast f^\dagger$ is a positive
definite on hypergroup $Q$, see~\cite{bloom_heyer91:_harmon_analy}.

Let $(Q,*,e, \check{}\,)$ be a locally compact hypergroup. Define a
linear map $\tilde \Delta$, homomorphisms $\epsilon$ and $\,\check{}\,$ as
follows:
\begin{equation}\label{eq:Delta-epsilon-tilda-def}
  \begin{array}{c}
    \tilde \Delta\colon \fc_b(Q)\rightarrow \fc_b(Q\times Q),\qquad
    (\tilde \Delta f)(p,q) = \pair{f}{\varepsilon_p \ast
      \varepsilon_q},\\[1.5mm]
    \epsilon\colon \fc_b(Q) \rightarrow \Bbb C,\qquad
    \epsilon(f)=f(e),\\[1.5mm]
    \check{}\,\colon \fc_b(Q) \rightarrow \fc_b(Q),\qquad
    \check f(p)=f(\check p),
  \end{array}
\end{equation}
where $\fc_b(Q)$ is considered as a commutative $C^*$-algebra.

\medskip
The following lemma is clear.
\begin{lemma}\label{L:measure-support-separation}
  Let $Q$ be a locally compact space, $\mu \in \meas_b(Q)\cap
  \meas_+(Q)$, and $F\subset Q$ is a closed set. Then $F\cap
  \supp(\mu)=\emptyset$ if and only if there is $f\in \fc_+(Q)$,
  $\|f\|_\infty=1$, such that $f(r)=0$ for all $r\in F$ and
  $\pair{f}{\mu}=\|\mu\|$.
\end{lemma}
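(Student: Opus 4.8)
The plan is to prove the two implications separately; everything rests on the fact, recalled above, that $\|\mu\|=\pair{1_Q}{\mu}$ for a bounded positive measure $\mu$, so that the equality $\pair{f}{\mu}=\|\mu\|$ is the same as $\pair{1_Q-f}{\mu}=0$. For the implication ``$\Leftarrow$'': given $f\in\fc_+(Q)$ with $\|f\|_\infty=1$, $f\equiv0$ on $F$ and $\pair{f}{\mu}=\|\mu\|$, note that $0\le f\le\|f\|_\infty 1_Q=1_Q$, so $1_Q-f$ is a nonnegative continuous function with $\pair{1_Q-f}{\mu}=\|\mu\|-\|\mu\|=0$; since $\mu\ge0$, a nonnegative continuous function with vanishing $\mu$-integral must be $0$ on $\supp(\mu)$, hence $f\equiv1$ there. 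As $f\equiv0$ on $F$ and no point can take both values $0$ and $1$, this forces $F\cap\supp(\mu)=\emptyset$.

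For the implication ``$\Rightarrow$'': assume $F\cap\supp(\mu)=\emptyset$; the case $\mu=0$ being trivial, we may take $\mu\neq0$, so that $\supp(\mu)$ is a nonempty closed set contained in the open set $U:=Q\setminus F$. It suffices to produce a continuous $f\colon Q\to[0,1]$ equal to $1$ on $\supp(\mu)$ and to $0$ on $F$: then $\|f\|_\infty=1$ (since $\supp(\mu)\neq\emptyset$), $f\in\fc_+(Q)$, $f\equiv0$ on $F$, and $\pair{f}{\mu}=\pair{1_Q}{\mu}=\|\mu\|$ because $1_Q-f$ vanishes on $\supp(\mu)$, which carries $\mu$. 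When $\supp(\mu)$ is compact such an $f$ is furnished at once by the Urysohn lemma for locally compact Hausdorff spaces, taken as an $f\in\fcc(Q)$ with $\supp(f)\subset U$ and $f\equiv1$ on $\supp(\mu)$; and this is the only case that occurs in the sequel, where $\mu$ is a convolution $\varepsilon_s\ast\varepsilon_t$ and hence has compact support by axiom $(H_2)$.

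The step that is not completely routine is this last construction when $\supp(\mu)$ fails to be compact: producing such an $f$ then amounts to separating the disjoint closed sets $\supp(\mu)$ and $F$ by a continuous $[0,1]$-valued function, and this requires of $Q$ a separation property beyond mere local compactness (normality, which is automatic, for instance, when $Q$ is $\sigma$-compact). Inner regularity of the Radon measure $\mu$ shows that $\mu$ is concentrated on a $\sigma$-compact subset of $\supp(\mu)$, which suffices in reasonable spaces, so that in every use made of the lemma here no extra hypothesis on $Q$ is needed.
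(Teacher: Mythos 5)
The paper offers no proof of this lemma at all --- it is introduced with ``The following lemma is clear'' --- so there is nothing to compare your argument against except the way the lemma is used later. Judged on its own, your proof is correct in every case the paper needs, and the one delicate point is exactly the one you flag. The ``$\Leftarrow$'' direction is complete and needs no compactness: $\|\mu\|=\pair{1_Q}{\mu}$ gives $\pair{1_Q-f}{\mu}=0$, a nonnegative continuous function with vanishing integral against a positive Radon measure vanishes on the support, hence $f\equiv 1$ on $\supp(\mu)$ while $f\equiv 0$ on $F$. (This is in fact the only direction the paper cites explicitly, in $(\widetilde H_4)$ of Proposition~\ref{P:hypergroup->C^*-algebra} and $(H_4)$ of Proposition~\ref{P:C^*-algebra->hypergroup}, always with $\mu=\varepsilon_s*\varepsilon_t$.) The ``$\Rightarrow$'' direction via Urysohn is correct whenever $\supp(\mu)$ is compact, which covers all uses, since $\supp(\varepsilon_s*\varepsilon_t)$ is compact by $(H_2)$.

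Two sharpenings of your closing remarks. First, your caution about non-compact supports is warranted more strongly than you state: as literally written the lemma is \emph{false} for a general locally compact Hausdorff $Q$. Such a space need not be normal (the deleted Tychonoff plank); there, one of the two inseparable closed sets is countable and discrete, hence equals $\supp(\mu)$ for $\mu=\sum_n 2^{-n}\varepsilon_{x_n}\in\meas_b(Q)\cap\meas_+(Q)$, and no continuous $[0,1]$-valued function is $1$ on it and $0$ on the other closed set. Your appeal to inner regularity cannot repair this: any continuous $f$ with $\|f\|_\infty\le 1$ and $\pair{f}{\mu}=\|\mu\|$ must equal $1$ on all of $\supp(\mu)$ (by the same argument as in your ``$\Leftarrow$'' direction), not merely on a $\sigma$-compact carrier, so one genuinely needs to separate $\supp(\mu)$ from $F$ by a continuous function. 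Second, the case $\mu=0$ is not quite trivial: if in addition $F=Q$, then $F\cap\supp(\mu)=\emptyset$ holds but no $f$ with $\|f\|_\infty=1$ vanishes on $F$. Neither defect touches the paper, but the clean statement of the lemma should assume $\supp(\mu)$ compact and nonempty, which is what your Urysohn argument actually proves.
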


\begin{proposition}\label{P:hypergroup->C^*-algebra}
  Let $(Q,*,e, \check{}\,)$ be a locally compact hypergroup. Then the
  maps defined by~(\ref{eq:Delta-epsilon-tilda-def}) have the
  following properties:
  \begin{itemize}
  \item [$(\widetilde H_1)$] the map $\tilde \Delta$ is linear and
    coassociative, that is,
    \begin{equation*}
%      \label{eq:Delta-coassociative}
      (\tilde \Delta \times \id) \circ \tilde \Delta = (\id \times \tilde \Delta) \circ \tilde \Delta;
    \end{equation*}
  \item [$(\widetilde H_2)$]
    \begin{itemize}
    \item [\textup{(a)}] the map $\tilde \Delta$ is positive, that is,
      $\tilde \Delta (\fc_{b,+})\subset \fc_{b,+}(Q\times Q)$, where
      $\fc_{b,+}(Q)$ \textup{(}resp., $\fc_{b,+}(Q\times Q)$\textup{)}
      is the cone of bounded nonnegative continuous functions on $Q$
      \textup{(}resp., $Q\times Q$\textup{)},
  \item [\textup{(b)}] $\tilde \Delta\,1_Q=1_{Q\times Q}$,
  \item[\textup{(c)}] for any $s,t\in Q$ there is $f\in
    \fcc_+(Q)$ such that $\|f\|_\infty\leq 1$ and $(\tilde \Delta\,
    f)(s,t)=1$;
  \end{itemize}
  % \begin{equation}\label{eq:delta-compactness}
    %   \begin{array}{c}
    %     \delta\bigl(\fcc(Q)\bigr)\cdot
    %     \bigl(1_Q\otimes \fcc(Q)\bigr) \subset \fcc(Q\times Q),\\[1.5mm]
    %     \delta\bigl(\fcc(Q)\bigr)\cdot \bigl(\fcc(Q) \otimes 1_Q\bigr)
    %     \subset \fcc(Q\times Q);
    %   \end{array}
    % \end{equation}
  % \item [$(\widetilde H_3)$] $\delta\bigl(\fc_b(Q)\bigr) \subset
  %   \fc_b(Q\times Q)$;
\item [$(\widetilde H_4)$] if $F\subset Q$ is closed, $(s_0,t_0)\in
  Q\times Q$ and $f_0\in \fcc_+(Q)$ is such that $\|f_0\|_\infty=1$,
  $f_0(r)=0$ for all $r\in F$ and $(\tilde \Delta\, f_0)(s_0,t_0)=1$, then
  there exist an open neighborhood  $U\subset
  Q\times Q$ of $(s_0,t_0)$ and $f\in\fcc_+(Q)$ such that
  $\|f\|_\infty=1$, $f(r)=0$ for all $r\in F$ and $(\tilde \Delta f)(s,t)=1$
  for all $(s,t)\in U$;
  \item [$(\widetilde H_5)$] the homomorphism $\epsilon$ is a right
    and left counit, that is,
    \begin{equation}
      \label{eq:epsilon-counit-property}
      (\epsilon \times \id)\circ \tilde \Delta=(\id \times \epsilon)\circ \tilde \Delta=\id;
    \end{equation}
  \item [$(\widetilde H_6)$] the homomorphism $\, \check\,$ satisfies
    $\fcc(Q)\,\check{}\,\subset \fcc(Q)$ and the following identities:
    \begin{equation}
      \label{eq:involution-property}
      \begin{array}{c}
        \check{}\,\circ\, \check{}\,=\id,\\[1.5mm]
        \tilde \Delta \circ \,\check{}\,=
        \Sigma \circ (\,\check{}\, \times\, \check{}\,)\circ \tilde \Delta,
      \end{array}
    \end{equation}
    where $\Sigma: \fc_b(Q\times Q) \rightarrow \fc_b( Q\times Q)$ is
    defined by $(\Sigma f)(p,q)=f(q,p)$;
  \item [$(\widetilde H_7)$] if $\Gamma\,\check{}\, \subset Q\times Q$
    is a graph of the homeomorphism $\,\check{}\,$, that is,
    \begin{equation*}
      \Gamma\,\check{}\,=\{(p,\check p)\in Q\times Q:p \in Q\},
    \end{equation*}
    then
    \begin{equation}
      \label{eq:graph-involution}
      \Gamma\,\check{}\,=\bigcap_{f\in \fcc_+^e(G)} \supp\bigl(\tilde \Delta(f)\bigr),
    \end{equation}
    where $\fcc_+^e(G)=\{f \in \fcc_+(G): f(e)>0\}$.
  \end{itemize}
\end{proposition}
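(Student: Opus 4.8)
The plan is to verify the dual axioms $(\widetilde H_1)$, $(\widetilde H_2)$, $(\widetilde H_4)$--$(\widetilde H_7)$ one at a time, in each case pairing the map $\tilde\Delta$ from~(\ref{eq:Delta-epsilon-tilda-def}) against convolutions of Dirac measures and translating the corresponding hypergroup axiom $(H_1)$--$(H_7)$; the two ``topological'' items $(\widetilde H_4)$ and $(\widetilde H_7)$ in addition require Lemma~\ref{L:measure-support-separation}, the Michael-topology continuity $(H_4)$, and an Urysohn construction in the locally compact space $Q$. For $(\widetilde H_1)$ I would compute, for $f\in\fc_b(Q)$ and $p,q,r\in Q$,
\[
  \bigl((\tilde\Delta\times\id)\circ\tilde\Delta\,f\bigr)(p,q,r)
  =\int_Q(\tilde\Delta f)(s,r)\,d(\varepsilon_p\ast\varepsilon_q)(s)
  =\pair{f}{(\varepsilon_p\ast\varepsilon_q)\ast\varepsilon_r},
\]
and likewise $\bigl((\id\times\tilde\Delta)\circ\tilde\Delta\,f\bigr)(p,q,r)=\pair{f}{\varepsilon_p\ast(\varepsilon_q\ast\varepsilon_r)}$, the interchange of the integral with the pairing being Fubini for Radon measures (valid since $\supp(\varepsilon_p\ast\varepsilon_q)$ is compact); coassociativity is then exactly associativity $(H_1)$.

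For $(\widetilde H_2)$, positivity of $\tilde\Delta$ and $\tilde\Delta\,1_Q=1_{Q\times Q}$ follow at once from $\varepsilon_p\ast\varepsilon_q\in\meas_p(Q)$ in $(H_2)$, while part~(c) follows by choosing $f\in\fcc_+(Q)$ with $0\le f\le1$ and $f\equiv1$ on the compact set $\supp(\varepsilon_s\ast\varepsilon_t)$, so that $(\tilde\Delta f)(s,t)=\pair{f}{\varepsilon_s\ast\varepsilon_t}=1$. For $(\widetilde H_5)$, axiom $(H_5)$ gives $(\tilde\Delta f)(e,q)=\pair{f}{\varepsilon_e\ast\varepsilon_q}=f(q)$ and $(\tilde\Delta f)(p,e)=f(p)$, which is~(\ref{eq:epsilon-counit-property}). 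For $(\widetilde H_6)$, the relations $\check{}\circ\check{}=\id$ and $\fcc(Q)\,\check{}\,\subset\fcc(Q)$ hold because $s\mapsto\check s$ is an involutive homeomorphism, and the second identity in~(\ref{eq:involution-property}) is obtained by evaluating both sides at $(p,q)$ and using $(\varepsilon_p\ast\varepsilon_q)\check{}=\varepsilon_{\check q}\ast\varepsilon_{\check p}$ from $(H_6)$: both sides equal $\pair{f}{\varepsilon_{\check q}\ast\varepsilon_{\check p}}$.

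For $(\widetilde H_4)$ I would first apply Lemma~\ref{L:measure-support-separation} to $\mu=\varepsilon_{s_0}\ast\varepsilon_{t_0}$ to rephrase the hypotheses on $f_0$ as $F\cap\supp(\varepsilon_{s_0}\ast\varepsilon_{t_0})=\emptyset$. Since $K_0:=\supp(\varepsilon_{s_0}\ast\varepsilon_{t_0})$ is compact and disjoint from the closed set $F$, local compactness yields an open $W$ with $\overline W$ compact, $K_0\subset W$, $\overline W\cap F=\emptyset$. The family $\{K\in\comps:K\subset W\}=\ucomps{W}{W}$ is a subbasic Michael-open set, so by $(H_4)$ there is an open neighborhood $U$ of $(s_0,t_0)$ with $\supp(\varepsilon_s\ast\varepsilon_t)\subset W$ for all $(s,t)\in U$; any $f\in\fcc_+(Q)$ with $\|f\|_\infty=1$, $f\equiv1$ on $\overline W$ and $\supp f$ compact and disjoint from $F$ (Urysohn) then satisfies $f|_F=0$ and $(\tilde\Delta f)(s,t)=\pair{f}{\varepsilon_s\ast\varepsilon_t}=1$ on $U$. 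For $(\widetilde H_7)$, since $f\ge0$ one has $(\tilde\Delta f)(p,q)=\pair{f}{\varepsilon_p\ast\varepsilon_q}\ne0$ exactly when $\supp f\cap\supp(\varepsilon_p\ast\varepsilon_q)\ne\emptyset$; hence if $e\in\supp(\varepsilon_p\ast\varepsilon_q)$, every $f\in\fcc_+^e(G)$ is positive on a neighborhood of $e$ carrying positive $\varepsilon_p\ast\varepsilon_q$-mass, so $(\tilde\Delta f)(p,q)>0$ and $(p,q)\in\supp(\tilde\Delta f)$. Conversely, if $e\notin\supp(\varepsilon_p\ast\varepsilon_q)$, pick $\overline W$ compact with $e\in W$ and $\overline W\cap\supp(\varepsilon_p\ast\varepsilon_q)=\emptyset$, use $(H_4)$ with the subbasic open set $\{K:K\subset Q\setminus\overline W\}$ to find a neighborhood $U$ of $(p,q)$ on which $\supp(\varepsilon_s\ast\varepsilon_t)\cap\overline W=\emptyset$, and take $f\in\fcc_+^e(G)$ with $\supp f\subset W$; then $\tilde\Delta f$ vanishes on $U$, so $(p,q)\notin\supp(\tilde\Delta f)$. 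Combined with $(H_7)$, which states $e\in\supp(\varepsilon_p\ast\varepsilon_q)\iff p=\check q$, this yields~(\ref{eq:graph-involution}).

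The genuinely routine items are $(\widetilde H_1)$, $(\widetilde H_2)$(a),(b), $(\widetilde H_5)$ and $(\widetilde H_6)$, which are pure dualization. I expect the main obstacle to be the topological bookkeeping in $(\widetilde H_4)$ and $(\widetilde H_7)$: one must check that the specific families of compact sets that arise — $\{K:K\subset W\}$ for $W$ open, equivalently $\ucomps{W}{W}$, and $\{K:K\subset Q\setminus\overline W\}$ — are genuinely open in the Michael topology, and one must produce, via Urysohn's lemma in the locally compact space $Q$, compactly supported nonnegative functions that are simultaneously equal to $1$ on a prescribed compact set and $0$ on a prescribed disjoint closed set. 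These are precisely the points where the local compactness of $Q$ and the exact form of axiom $(H_4)$ enter.
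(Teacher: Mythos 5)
Your proposal is correct and follows essentially the same route as the paper: dualization against $\varepsilon_p\ast\varepsilon_q$ for $(\widetilde H_1)$, $(\widetilde H_2)$, $(\widetilde H_5)$, $(\widetilde H_6)$, Lemma~\ref{L:measure-support-separation} plus axiom $(H_4)$ in the Michael topology plus an Urysohn function for $(\widetilde H_4)$ and $(\widetilde H_7)$. The only caveat is the throwaway claim in $(\widetilde H_7)$ that $\pair{f}{\varepsilon_p\ast\varepsilon_q}\neq 0$ \emph{exactly} when $\supp f$ meets $\supp(\varepsilon_p\ast\varepsilon_q)$ — the correct criterion uses the open set $\{f>0\}$ rather than its closure — but you only invoke the two valid implications, so the argument stands.
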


\begin{proof}
  First of all note that $\tilde \Delta f\in \fc(Q\times Q)$ if $f\in
  \fc(Q)$, as directly follows from ($H_3$). It also immediately
  follows from ($H_2$) that $\tilde \Delta$ is positive and $\tilde \Delta\,
  1_Q=1_{Q\times Q}$. To see that $\tilde \Delta\, f $ is bounded for bounded
  $f\in \fc(Q)$, we have that $-\|f\|_\infty\, 1_Q \leq f \leq
  \|f\|_\infty\, 1_Q$ and, hence,
  \begin{equation*}
    -\|f\|_{\infty}\, 1_{Q\times Q} \leq \tilde \Delta\, f\leq \|f\|_\infty\, 1_{Q\times Q},
  \end{equation*}
  which means that $\tilde \Delta\, f$ is bounded. It is also clear that
  $\check f $ is continuous if $f$ is such, and that $\|\check
  f\|_\infty=\|f\|_\infty$ . This means that the maps
  in~\eqref{eq:Delta-epsilon-tilda-def} are well defined.

  To prove ($\widetilde H_1$), let $f\in \fc_b(Q)$ and consider
  \begin{equation*}
    \bigl( (\tilde \Delta \times \id)\circ \tilde \Delta\, f\bigr)(s_1, s_2, s_3)=
    \pair{\tilde \Delta\, f}{(\varepsilon_{s_1}*\varepsilon_{s_2})\otimes \varepsilon_{s_3}}
    =\pair{f}{\varepsilon_{s_1}* \varepsilon_{s_2} * \varepsilon_{s_3}}.
  \end{equation*}
  Similarly,
  \begin{equation*}
    \bigl((\id \times \tilde \Delta)\circ \tilde \Delta\, f\bigr) (s_1, s_2, s_3)=
    \pair{\tilde \Delta\, f}{\varepsilon_{s_1} \otimes (\varepsilon_{s_2}*\varepsilon_{s_3})}
    =\pair{f}{\varepsilon_{s_1}* \varepsilon_{s_2} * \varepsilon_{s_3}}.
  \end{equation*}

  It has already been mentioned, that all statements in ($\widetilde
  H_2$) except for (c) hold true. To prove (c) we use compactness of
  $F=\supp(\varepsilon_s*\varepsilon_t)$ in ($H_2$) a function $f\in
  \fcc_+(Q)$ such that $f(p)=1$ for all $p\in F$. Then
  \begin{align*}
    (\tilde \Delta \,f)(s,t)&=\pair{f}{\varepsilon_s*\varepsilon_t}=\int_Q
    f(p)\, d(\varepsilon_s*\varepsilon_t)(p)= \int_F f(p)\,
    d(\varepsilon_s*\varepsilon_t)(p)\\
    &=\int_F
    d(\varepsilon_s*\varepsilon_t)(p)=
    \pair{1_Q}{\varepsilon_s*\varepsilon_t}\\
    &=\pair{\tilde \Delta\, 1_Q}{\varepsilon_s\otimes \varepsilon_t}=
    1_{Q\times Q}(s,t)=1.
  \end{align*}

  \smallskip

  Let us prove ($\widetilde H_4$). Denote
  $E_0=\supp(\varepsilon_{s_0}*\varepsilon_{t_0})$. If $F$ is closed
  and $f_0$ is such that $\|f_0\|_\infty=1$, $f_0(r)=0$ for all $r\in
  F$ and $(\tilde \Delta\,
  f_0)(s_0,t_0)=\pair{f_0}{\varepsilon_{s_0}*\varepsilon_{t_0}}=1$,
  then it follows from Lemma~\ref{L:measure-support-separation} that
  $F\cap E_0=\emptyset$, hence $E_0$ is a subset of the open set
  $Q\setminus F$. Let $V\subset Q$ be an open neighborhood of $E_0$
  such that $\overline V \subset Q\setminus F$. By ($H_4$) there is an
  open neighborhood $U\subset Q\times Q$ of $(s_0,t_0)$ such that
  $\supp(\varepsilon_s*\varepsilon_t)\subset V$ for all $(s,t)\in
  U$. Let $f\in \fc_+(Q)$, $0\leq f(r)\leq 1$, be such that $f(r)=0$
  for all $r\in F$ and $f(r)=1$ for all $r\in \overline V$. Then, for
  any $(s,t)\in U$, we have
  \begin{equation*}
    (\tilde \Delta\, f)(s,t)=\pair{f}{\varepsilon_s*\varepsilon_t}=
    \pair{1_Q}{\varepsilon_s*\varepsilon_t}=1.
  \end{equation*}

  \smallskip

  It is easy to see that ($\widetilde H_5$) holds. Indeed. let $f\in
  \fc_b(Q)$ and $s\in Q$. Then
  \begin{equation*}
    \bigl( (\epsilon\times \id)\circ \tilde \Delta\, f\bigr)(s)=
    \pair{f}{\varepsilon_e*\varepsilon_s}=\pair{f}{\varepsilon_s}=f(s).
  \end{equation*}
  The second part of~\eqref{eq:epsilon-counit-property} is proved
  similarly.

  \smallskip

  Property ($\widetilde H_6$) is a direct consequence of ($H_6$).

  \smallskip Let us prove ($\widetilde H_7$). Let $f \in \fcc_+^e(G)$. Then
  \begin{equation*}
    (\tilde \Delta\, f)(s,\check s)=\pair{f}{\varepsilon_s*\varepsilon_{\check s}}>0,
  \end{equation*}
  since $e\in \supp(\varepsilon_s*\varepsilon_{\check s})$ by ($H_7$). Thus
  $(s,\check s)\in \supp(\tilde \Delta \, f)$ and
  \begin{equation*}
    \Gamma\,\check{}\, \subset
    \bigcap_{f\in \fcc_+^e(G)} S(\tilde \Delta\, f).
  \end{equation*}
  Conversely, let $s_0\neq t_0$, hence $(s_0,\check t_0)\notin
  \Gamma\,\check{}\,$. By ($H_7$), $e\notin
  \supp(\varepsilon_{s_0}*\varepsilon_{\check t_0})$. Since
  $\supp(\varepsilon_s*\varepsilon_{\check t})$ is compact there are
  open neighborhoods $U$ of $e$ and $V$ of $
  \supp(\varepsilon_s*\varepsilon_{\check t})$ such that $U\cap
  V=\emptyset$.  Let $f\in\fcc_+(U)$ be such that $f(e)=1$. Hence
  $f\in \fcc_+^e(G)$ and, as it follows from ($H_4$), there is an open
  neighborhood $W\subset Q\times Q$ of $(s_0,\check t_0)$ such that
  $\supp(\varepsilon_s*\varepsilon_{\check t}) \subset V$ for all
  $(s,\check t)\in W$, hence
  $\supp(f)\cap\supp(\varepsilon_s*\varepsilon_{\check t})=\emptyset$
  for such $(s,\check t)$. Thus
  \begin{equation*}
    (\tilde \Delta\, f)(s,\check t) =\pair{f}{\varepsilon_s*\varepsilon_{\check t}}=0,
  \end{equation*}
  and $(s_0,\check t_0)\notin \supp(\tilde \Delta\, f)$. This means that
  \begin{equation*}
    (s,\check t)\notin \bigcap_{f\in\fcc_+^e(G)} \supp(\tilde \Delta\, f),
    \quad (s,\check t) \in W,
  \end{equation*}
  which finishes the proof.
\end{proof}

\begin{remark}
  It immediately follows from ($\widetilde H_2$)~(a) and ($\widetilde
  H_2$)~(b) that ($\widetilde H_2$)~(c) is equivalent to the condition
  that there is $f\in\fcc_+(Q)$ such that $\tilde \Delta\, f=\|\tilde \Delta\|$.
\end{remark}

\begin{proposition}\label{P:C^*-algebra->hypergroup}
  Let $Q$ be a Hausdorff locally compact space, and let a linear map
  $\tilde \Delta\colon \fc_b(Q) \rightarrow \fc_b(Q\times Q)$,
  $*$-homomorphisms $\,\check{}\,\colon \fc_b(Q) \rightarrow \fc_b(Q)$
  and $\epsilon\colon \fc_b(Q) \rightarrow \Bbb C$ be given and satisfy
  the properties \textup{(}$\widetilde
  H_1$\textup{)}~--~\textup{(}$\widetilde H_7$\textup{)} in
  Proposition~\ref{P:hypergroup->C^*-algebra}. For $\mu_1,\mu_2,\mu
  \in \meas_b(Q)$, define $\mu_1*\mu_2\in \meas_b(Q)$ and $\check
  \mu\in \meas_b(Q)$ by
  \begin{equation}\label{eq:b-measure-convolution-def}
    \begin{array}{rcl}
      \pair{f}{\mu_1*\mu_2}&=&
            \displaystyle
            \int_{Q\times Q} (\tilde \Delta f)(s_1,s_2)\,
      d\mu_1(s_1) d \mu_2(s_2),
      \\[4mm]
      \pair{f}{\check \mu}&=&\pair{\check f}{\mu},

    \end{array}
  \end{equation}
  where $f \in \fcc(Q)$, and let $e=\supp(\epsilon)$. Then
  $(Q,*,e,\,\check{}\,)$ is a locally compact hypergroup.
\end{proposition}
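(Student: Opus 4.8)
The plan is to verify the seven hypergroup axioms $(H_1)$--$(H_7)$ one at a time, each appearing as the measure-side transcription of the corresponding dual axiom of Proposition~\ref{P:hypergroup->C^*-algebra}. From~\eqref{eq:b-measure-convolution-def}, $*$ is bilinear on $\meas_b(Q)$, and since $\tilde\Delta$ is positive with $\tilde\Delta\,1_Q=1_{Q\times Q}$ it is a contraction, so $|\pair f{\mu_1*\mu_2}|\le\|f\|_\infty\|\mu_1\|\,\|\mu_2\|$ and $\mu_1*\mu_2$ extends from $\fcc(Q)$ to a bounded measure with $\|\mu_1*\mu_2\|\le\|\mu_1\|\,\|\mu_2\|$; associativity is precisely the coassociativity $(\widetilde H_1)$, obtained by iterating~\eqref{eq:b-measure-convolution-def} and applying Fubini for bounded measures, so that both triple products pair with $f$ to $\int((\tilde\Delta\times\id)\circ\tilde\Delta\,f)\,d(\mu_1\otimes\mu_2\otimes\mu_3)=\int((\id\times\tilde\Delta)\circ\tilde\Delta\,f)\,d(\mu_1\otimes\mu_2\otimes\mu_3)$. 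Taking $\mu_i$ to be Dirac measures one has $\pair f{\varepsilon_s*\varepsilon_t}=(\tilde\Delta\,f)(s,t)$; positivity of $\tilde\Delta$ and $\tilde\Delta\,1_Q=1_{Q\times Q}$ give $\varepsilon_s*\varepsilon_t\in\meas_p(Q)$, and for the compactness required in $(H_2)$ we pick $f\in\fcc_+(Q)$ with $\|f\|_\infty\le1$ and $(\tilde\Delta\,f)(s,t)=1$ from $(\widetilde H_2)$(c): then $1_Q-f\ge0$ and $\pair{1_Q-f}{\varepsilon_s*\varepsilon_t}=0$, so $f\equiv1$ on $\supp(\varepsilon_s*\varepsilon_t)$ and $\supp(\varepsilon_s*\varepsilon_t)\subset\supp f$ is compact. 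Axiom $(H_3)$ is the continuity of $(s,t)\mapsto(\tilde\Delta\,f)(s,t)$ for $f\in\fc_0(Q)$, immediate from $\tilde\Delta\,f\in\fc_b(Q\times Q)$.

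For $(H_4)$ I would show that $(s,t)\mapsto\supp(\varepsilon_s*\varepsilon_t)$ pulls the subbasic Michael sets $\ucomps VW$ back to open sets, treating the two defining conditions separately. If $\supp(\varepsilon_{s_0}*\varepsilon_{t_0})\subset W$, put $F=Q\setminus W$ and take by Urysohn $f_0\in\fcc_+(Q)$, $\|f_0\|_\infty=1$, with $f_0|_F=0$ and $f_0\equiv1$ on the compact set $\supp(\varepsilon_{s_0}*\varepsilon_{t_0})$, so that $(\tilde\Delta\,f_0)(s_0,t_0)=1$; then $(\widetilde H_4)$ gives a neighborhood $U$ of $(s_0,t_0)$ and $f\in\fcc_+(Q)$ with $\|f\|_\infty=1$, $f|_F=0$, $(\tilde\Delta\,f)(s,t)=1$ on $U$, and the argument used for $(H_2)$ then forces $\supp(\varepsilon_s*\varepsilon_t)\subset\{f=1\}\subset W$ for $(s,t)\in U$. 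If instead $r_0\in\supp(\varepsilon_{s_0}*\varepsilon_{t_0})\cap V$, choose $h\in\fcc_+(V)$ with $h(r_0)>0$; then $(\tilde\Delta\,h)(s_0,t_0)=\pair h{\varepsilon_{s_0}*\varepsilon_{t_0}}>0$, so by continuity of $\tilde\Delta\,h$ there is a neighborhood $U$ of $(s_0,t_0)$ on which $\tilde\Delta\,h>0$, and there $\supp(\varepsilon_s*\varepsilon_t)$ meets $\supp h\subset V$. Together these give the continuity required by $(H_4)$.

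For $(H_6)$, by $(\widetilde H_6)$ the map $\,\check{}\,$ is an involutive $*$-automorphism of $\fc_b(Q)$ carrying $\fcc(Q)$, hence $\fc_0(Q)$, onto itself, so it is induced by a homeomorphism $s\mapsto\check s$ of $Q$ with $\check{\check s}=s$ and $\check f(p)=f(\check p)$; using the second relation of $(\widetilde H_6)$, $\pair f{(\varepsilon_s*\varepsilon_t)\,\check{}\,}=(\tilde\Delta\,\check f)(s,t)=(\tilde\Delta\,f)(\check t,\check s)=\pair f{\varepsilon_{\check t}*\varepsilon_{\check s}}$, i.e.\ $(\varepsilon_s*\varepsilon_t)\,\check{}\,=\varepsilon_{\check t}*\varepsilon_{\check s}$. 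For $(H_5)$, one first checks that the character $\epsilon$ of $\fc_b(Q)$ is evaluation at a point $e\in Q$ (equivalently, that $\epsilon$ does not annihilate $\fcc(Q)$ --- which is in any case implicit in the hypotheses, since $(\widetilde H_7)$ is phrased in terms of such an $e$); then, with $e=\supp(\epsilon)$, $(\widetilde H_5)$ reads $(\tilde\Delta\,f)(e,s)=f(s)=(\tilde\Delta\,f)(s,e)$, i.e.\ $\varepsilon_e*\varepsilon_s=\varepsilon_s=\varepsilon_s*\varepsilon_e$, and $e$ is unique. Finally, for $(H_7)$: by the argument used for $(H_2)$, $e\in\supp(\varepsilon_s*\varepsilon_t)$ if and only if $(\tilde\Delta\,f)(s,t)>0$ for every $f\in\fcc_+^e(Q)$, and since $\tilde\Delta\,f\ge0$ is continuous a positive value at $(s,t)$ puts $(s,t)$ into $\supp(\tilde\Delta\,f)$; hence $e\in\supp(\varepsilon_s*\varepsilon_t)$ forces $(s,t)\in\bigcap_{f\in\fcc_+^e(Q)}\supp(\tilde\Delta\,f)=\Gamma\,\check{}\,$, i.e.\ $s=\check t$. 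Conversely, if $(s,t)\in\supp(\tilde\Delta\,f)$ for every $f\in\fcc_+^e(Q)$, then for each such $f$ every neighborhood of $(s,t)$ contains a point $(s',t')$ with $\supp(\varepsilon_{s'}*\varepsilon_{t'})\cap\supp f\ne\emptyset$; were $\supp(\varepsilon_s*\varepsilon_t)$ disjoint from $\supp f$, the upper semicontinuity of the support map established in $(H_4)$ would produce a neighborhood of $(s,t)$ on which the supports avoid $\supp f$, a contradiction; so $\supp(\varepsilon_s*\varepsilon_t)$ meets every $\supp f$ with $f\in\fcc_+^e(Q)$, and these forming a neighborhood base at $e$, we conclude $e\in\supp(\varepsilon_s*\varepsilon_t)$. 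Thus $s=\check t$ implies $e\in\supp(\varepsilon_s*\varepsilon_t)$, completing $(H_7)$.

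The main obstacle is $(H_4)$, and specifically its upper-semicontinuity half: this is exactly what the technical axiom $(\widetilde H_4)$ is tailored to supply, via the device of a test function equal to $1$ on the (now known to be compact) support of $\varepsilon_s*\varepsilon_t$, and every later step --- notably $(H_7)$ --- rests on it. The one other genuinely non-formal point is the verification in $(H_5)$ that $\epsilon$ is evaluation at a point of $Q$; one extracts this from the counit identity $(\widetilde H_5)$ together with the continuity already established, or simply reads it off the hypotheses. The remaining work --- the Fubini bookkeeping in $(H_1)$ with the functions $\tilde\Delta\,f$, which are bounded but not compactly supported, and the accompanying density passages from $\fcc(Q)$ to $\fc_0(Q)$ --- is routine.
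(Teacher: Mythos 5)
Your proposal is correct and follows essentially the same route as the paper: the norm estimate for $*$ from positivity and unitality of $\tilde\Delta$, the test-function trick ($g\le 1_Q-f$) for compactness of $\supp(\varepsilon_s*\varepsilon_t)$, the two-part Michael-subbasis argument for $(H_4)$ with $(\widetilde H_4)$ supplying the upper-semicontinuity half, and $(\widetilde H_7)$ for $(H_7)$. Your treatment is in fact slightly more careful than the paper's at two points --- you take $F=Q\setminus W$ where the paper's text has an apparent slip, and in the converse half of $(H_7)$ you justify via upper semicontinuity why $(s,\check s)\in\supp(\tilde\Delta f)$ forces $\supp(\varepsilon_s*\varepsilon_{\check s})$ to meet $\supp f$, where the paper asserts $(\tilde\Delta f)(s,\check s)>0$ directly.
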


\begin{proof}
  First consider ($H_1$) and show that $(\meas_b(Q),*)$ is indeed an
  associative algebra, if the convolution of measures is given by the
  first formula in~\eqref{eq:b-measure-convolution-def}.

  If $\mu_1, \mu_2 \in\meas_b(Q)$, then the measures $|\mu_1|$ and
  $|\mu_2|$ are also bounded. Moreover,
  by~\eqref{eq:b-measure-convolution-def} and using ($\widetilde
  H_2$), we have
  \begin{align*}
    |\pair{f}{\mu_1*\mu_2}|&=\bigl|
    \int_{Q\times Q} (\tilde \Delta\, f)(s_1, s_2)\, d\mu_1(s_1) d \mu_2(s_2)\bigr|\\
    &\leq \int_{Q\times Q} (\tilde \Delta \bigl| f|)(s_1,s_2)\, d|\mu_1|(s_1) d|\mu_2|(s_2)\\
    &\leq \int_{Q\times Q} \tilde \Delta\bigl(\|f\|_\infty\, 1_Q\bigr)(s_1,s_2)\,
    d|\mu_1|(s_1) d|m_2|(s_2)\\
    &=\|f\|_\infty \int_{Q\times Q} 1_Q(s_1)1_Q(s_2)\, d|\mu_1|(s_1) d|\mu_2|(s_2)\\
    &=\|f\|_\infty\, \|\mu_1\|\, \|\mu_2\|,
  \end{align*}
  which shows that $\|\mu_1*\mu_2\|\leq \|\mu_1\|\, \|\mu_2\|$, hence
  $\mu_1*\mu_2 \in \meas_b(Q)$.

  Associativity of the algebra $(\meas_b(Q),*)$ is immediately implied
  by ($\widetilde H_1$).

  \smallskip Consider ($H_2$). Since $\tilde \Delta$ is positive by
  ($\widetilde H_2$), the measure $\varepsilon_s*\varepsilon_t$ is
  nonnegative for $s,t \in Q$. Moreover,
  \begin{equation*}
    \pair{1_Q}{\varepsilon_s*\varepsilon_t}=
    \pair{1_Q\otimes 1_Q}{\varepsilon_s\otimes \varepsilon_t}=1,
  \end{equation*}
  hence $\varepsilon_s *\varepsilon_t \in \meas_p(Q)$.

  Let us show that $\supp(\varepsilon_s*\varepsilon_t)$ is compact for
  any $s,t\in Q$. Indeed, using ($\widetilde H_2$)~(c) there is $f\in
  \fcc_+(Q)$ such that $\|f\|_\infty\leq 1$ and $(\tilde \Delta\,
  f)(s,t)=1$. This means that
  $\pair{f}{\varepsilon_s*\varepsilon_t}=1$. Denote $F=\supp(f)$ and
  let $g\in \fcc_+(Q)$ be arbitrary such that $\|g\|_\infty\leq 1$ and
  $\supp(g) \subset Q\setminus F$. Then $f+g \leq 1_Q$, that is,
  $g\leq 1_Q-f$, and
  \begin{equation*}
    0\leq \pair{g}{\varepsilon_s*\varepsilon_t}\leq
    \pair{1_Q-f}{\varepsilon_s*\varepsilon_t}=
    \pair{1_Q}{\varepsilon_s*\varepsilon_t}-\pair{f}{\varepsilon_s*\varepsilon_t}
    =0.
  \end{equation*}
  This shows that $\pair{g}{\varepsilon_s*\varepsilon_t}=0$ for any
  such $g$, which means that $\supp(\varepsilon_s*\varepsilon_t)
  \subset F$. Since $F$ is compact and support of a measure is closed,
  $\supp(\varepsilon_s*\varepsilon_t)$ is compact.

  \smallskip

  Property ($H_3$) is immediate, since $\tilde \Delta\, f\in
  \fc_b(Q\times Q)$ by definition for any $f\in \fc_0(Q)$.

  \smallskip

  Consider ($H_4$). Let $(s_0,t_0)\in Q\times Q$, and
  denote $F_0=\supp(\varepsilon_{s_0}*\varepsilon_{t_0})$, which is a
  compact set by ($H_2$). Let $V,W\subset Q$ be open such that
  $\ucomps{V}{W}$ is an open neighborhood of $F_0$ in the Michael
  topology, that is, $F_0\subset W$ and $F_0\cap V\neq
  \emptyset$. Choose $p\in F_0\cap V$ and let $f\in
  \fcc_+^p(V)$. Then, since $p\in F_0$,
  \begin{equation*}
    (\tilde \Delta\, f)(s_0,t_0)=\pair{f}{\varepsilon_{s_0}*\varepsilon_{t_0}}>0,
  \end{equation*}
  and, by continuity of $\tilde \Delta\, f$, we can find a neighborhood
  $U_1\subset Q\times Q$ of $(s_0,t_0)$ such that $(\tilde \Delta\,
  f)(s,t)>0$ for all $(s,t)\in U_1$. This would imply that
  $\pair{f}{\varepsilon_s*\varepsilon_t}>0$, hence $\supp(f)\cap
  \supp(\varepsilon_s*\varepsilon_t)\neq \emptyset$, in particular,
  $V\cap\supp(\varepsilon_s*\varepsilon_t)\neq \emptyset$ for all
  $(s,t) \in U_1$.

  To proceed, let $W\subset Q$ be open and
  $\supp(\varepsilon_{s_0}*\varepsilon_{t_0})\subset W$. Let
  $F=Q\setminus \supp(\varepsilon_{s_0}*\varepsilon_{t_0})$. Then
  there is a function $f_0\in \fc_+(Q)$, $0\leq f(r)\leq 1$, $r\in Q$,
  such that $f(r)=0$ for $r\in F$ and $f(r)=1$ for $r\in
  \supp(\varepsilon_{s_0}* \varepsilon_{t_0})$. We have
  $\pair{f_0}{\varepsilon_{s_0}*\varepsilon_{t_0}}=
  \pair{1_Q}{\varepsilon_{s_0}*\varepsilon_{t_0}}
  =1=\|\varepsilon_{s_0}*\varepsilon_{t_0}\|$. Hence, there is a
  neighborhood $U\subset Q\times Q$ of the point $(s_0,t_0)$ and a
  function $f\in \fc_+$, $\|f\|_\infty=1$, such that $f(r)=0$ for all
  $r\in F$ and $\pair{f}{\varepsilon_s*\varepsilon_t}=1$ for all
  $(s,t)\in U$. By Lemma~\ref{L:measure-support-separation},
  $F\cap\supp(\varepsilon_s*\varepsilon_t)=\emptyset$, hence
  $\supp(\varepsilon_s*\varepsilon_t)\subset W$ for all $(s,t) \in U$.

  \smallskip

  ($H_5$) (resp., ($H_6$)) is immediately implied by
  ($\widetilde H_5$) (resp., ($\widetilde H_6$).

  \smallskip

  Let us prove ($H_7$). Let $s\neq t$ and show that $e
  \notin \supp(\varepsilon_s*\varepsilon_{\check t})$. Indeed,
  $(s,\check t)\notin \Gamma\,\check{}\,$, hence there exists $f\in
  \fcc_+^e(G)$ such that $(s,\check t)\notin \supp(\tilde \Delta\, f)$. This
  means that
  \begin{equation*}
    (\tilde \Delta \, f)(s,\check t)=\pair{f}{\varepsilon_s*\varepsilon_{\check t}}=0.
  \end{equation*}
  This means that
  $f\restriction_{\supp(\varepsilon_s*\varepsilon_{\check t})}=0$ but
  $f(e)>0$. Hence $e\notin \supp(\varepsilon_s*\varepsilon_{\check
    t})$.

  Conversely, let us show that $e\in
  \supp(\varepsilon_s*\varepsilon_{\check s})$ for any $s\in
  Q$. Indeed, if $U$ is an arbitrary open neighborhood of $e$, choose
  a function $f\in\fcc_+^e(U)$. Since $(s,\check s) \in
  \Gamma\,\check{}\,$, it follows from~\eqref{eq:graph-involution}
  that $(s,\check s)\in \supp(f)$, which means that
  \begin{equation*}
    (\tilde \Delta\, f)(s,\check s)=\pair{f}{\varepsilon_s*\varepsilon_{\check s}}>0,
  \end{equation*}
  and $e\in \supp(\varepsilon_s*\varepsilon_{\check s})$.
\end{proof}

\subsection{Fourier and Fourier-Stieltjes algebras}
\label{Fourier_alg}

\begin{definition}\label{def_repr}
  Let $Q$ be a locally compact hypergroup and $H$ a Hilbert space. A
  representation $\pi$ of the hypergroup $Q$ on the Hilbert space $H$
  is an involution homomorphism $\pi: \meas_b(Q) \to B(H)$ of the
  involution algebra $(\meas_b(Q), *, {}^*)$ into the $C^\ast$-algebra
  $B(H)$ of all linear bounded operators on $H$. The set of all
  representations of $Q$ will be denoted by $\Sigma$.
\end{definition}

Let us remark that definition~\ref{def_repr} can be rewritten in terms
of the comultiplication in~(\ref{eq:Delta-epsilon-tilda-def}) as
follows~\cite{Ch_Ka_Po2010}: a weakly continuous map $\pi : Q\to B(H)$
is ca;;ed a representation of the hypergroup~$Q$ if the following
conditions are satisfied:
\begin{itemize}
\item [(i)] $\pi(e) = I$, where $I$ is the identity operator on $H$;
\item [(ii)] $\pi(p)^\ast = \pi (p^\ast)$ for all $p\in Q$;
\item [(iii)] for any $\xi, \eta\in H$, we have $\tilde\Delta
  (\pi(\cdot)\xi, \eta)_H (p, q)= (\pi(p)\pi(q)\xi, \eta)_H$.
\end{itemize}

\begin{definition}\label{left_reg_repr}
  Let $\tilde m$ be a left Haar measure on the hypergroup~$Q$. A left
  (resp., right) regular representation is a mapping $\lambda : Q\ni p
  \mapsto L_p\in B(L_2(Q))$ (resp., $\mu : Q\ni p \mapsto R_p \in
  B(L^2(Q))$) defined, for $f \in \fcc(Q)$, by $(L_p f)(q) =
  (\tilde\Delta f)(p^\ast, q)$ (resp., $(R_p f)(q) = (\tilde\Delta
  f)(p, q)\kappa^{\frac 1 2} (p)$) and then extended to $L^2(Q)$ by
  continuity in virtue of the estimates $\|L_p f\|_2 \leq \|f\|_2$ and
  $\| R_p f \|_2\leq \|f\|_2$ \cite{Ch_Ka_Po2010}.
\end{definition}

Let us recall that each representation of a hypergroup can be uniquely
continued to a nondegenerate representation of the Banach $*$-algebra
$L^1(Q)$ and, conversely, each nondegenerate representation of the
$\ast$-algebra $L^1 (Q)$ gives rise to a representation of the
hypergroup $Q$ \cite{Ch_Ka_Po2010}. By $C^\ast (Q)$, we denote the
full $C^\ast$-algebra of the hypergroup $Q$, that is the closure of
$L^1(Q)$ with respect to the $C^*$-norm $\|f\| = \mbox{sup}_{\pi\in
  \Sigma} \pi (f)$.

The reduced $C^*$-algebra of the hypergroup $Q$ is denoted by
$C^\ast_r (Q)$ that is the $C^*$-closure of the $*$-algebra generated
by the family $\{L_p: p \in Q\}$ of operators on $L^2(Q)$ with respect
to the norm of $B(L^2(Q))$.

\begin{definition}\label{def_Four_St_alg}
  The Banach space dual to the full $C^*$-algebra $C^*(Q)$ will be
  called the Fourier-Stieltjes space and denoted by $\FSs(Q)$. The
  Banach space dual to the reduced $C^*$-algebra of the hypergroup
  $Q$, $C^*_r(Q)$, will be denoted by $\FSs_\lambda(Q)$.
\end{definition}

\begin{definition}
  Let $(H_1,\pi_1)$ and $(H_2,\pi_2)$ be two representations of a
  hypergroup $Q$. The representation $\pi_1$ is said to be weakly
  contained in the representation $\pi_2$ if the kernel of the
  representation $\pi_1$ contains the kernel of the representation
  $\pi_2$.
\end{definition}

It is well known, see~\cite{muruganandam07:_fourier_algeb} that for
any $\alpha\in \FSs (Q)$ (respectively, $\alpha\in \FSs_\lambda (Q)$)
there is a representation $(H, \pi)$ of the hypergroup $Q$
(respectively, a representation $(H, \pi)$ weakly contained in the
left regular representation) and two vectors $\xi, \eta\in H$ such
that the function $a\in C_b (Q)$ given by
\begin{equation*}
  a(p) = (\pi(p)\xi, \eta)_H ,\quad  p\in Q,
\end{equation*}
defines a linear functional $\alpha$ by
\begin{equation*}
\alpha (f) = \int_Q a(p) f(p) dm(p), \quad f\in L_1(Q),
\end{equation*}
such that its norm is given by
\begin{equation*}
  \|\alpha\| = \sup_{f\in L_1 (Q), \|f\|_\Sigma' = 1} \left | \int_Q a(p) f(p) dm(p) \right | = \|\xi\|_H \|\eta\|_H,
\end{equation*}
where $\Sigma' = \Sigma$ (respectively, $\Sigma' = \lambda$).

\begin{definition}\label{def_Four_alg}
  The Fourier space of a hypergroup $Q$ will be called the closure of
  the space generated by the elements $f\ast f^\dagger$, $f\in \fcc
  (Q)$ with respect to the norm of the space $\FSs_\lambda (Q)$, and
  denoted by $\Fs (Q)$.
\end{definition}

\subsection{Tensor product and conditional expectation}

Let $A$ and $B$ be $C^*$-algebras. The tensor product of $A$ and $B$,
denoted by $A\otimes B$, is the completion of the algebraic tensor
product $A\odot B$ with respect to the min-$C^*$-norm on $A\odot B$,
\begin{equation*}
  \|\sum_{i=1}^n a_i \otimes b_i\|_{\textup{min}}=\sup_{\pi_A \in \Sigma_A, \pi_B \in \Sigma_B} \Bigl\|\sum_{i=1}^n \pi_A(a_i)\otimes \pi_B(b_i)\Bigr\|  ,
\end{equation*}
where $\Sigma_A$ (resp., $\Sigma_B$) is the set of all representations
of the $C^*$-algebra $A$ (resp., $B$)~\cite{takesaki03_i:_theor}.

For a $C^*$-algebra $A$, the $C^*$-algebra of multipliers of $A$ is
denoted by $M(A)$, see~\cite{takesaki03_i:_theor} for details.

\medskip

Let $A$ be a $C^*$-algebra and $B \subset A$ a $C^*$-subalgebra of
$A$. A bounded linear map $P\colon A \rightarrow B$ is called a
{\sl conditional expectation} if it satisfies the following
properties~\cite{umegaki54:_condit_i}:
\begin{itemize}
\item [(i)] $P$ is a projection onto and has norm $1$, that is,
  $P^2=P$ and $\|P\|=1$;
\item [(ii)] $P$ is positive, that is $P(a^* a) \geq 0$ for any $a\in
  A$;
\item [(iii)] $P(b_1 a b_2)=b_1 P(a) b_2$ for any $a \in A$ and $b_1,
  b_2 \in B$;
\item [(iv)] $P(a^*) P(a) \leq P(a^*a)$ for all $a\in A$.
\end{itemize}
It follows from (ii) and the polarization identity
% $$
% 4 x^* y=\sum_{n=0}^3 i^n (x+i^n y)^*(x+i^ny)
% $$
that
\begin{itemize}
\item[(v)]
$P(a^*)=P(a)^*$, $a\in A$.
\end{itemize}

It also follows from~\cite{tamiyama57:_w} that (i) implies (ii), (iii), and (iv).

\section{Main results}\label{sec:main-results}

Let $G$ be a locally compact group with a Haar measure $m$, $A$ denote
the $C^*$-algebra $\fc_b(G)$, $A_0$ its $C^*$-subalgebra $\fc_0 (G)$,
and let $\fcc(G)$ be the ideal of $A$ consisting of functions with
compact supports. Let $P\colon A \rightarrow A$ be a conditional
expectation. The restriction of $P$ to $\fc_0(G)$ is still denoted by
$P$.

\begin{proposition}\label{P:preparatory}
  Let $B$ be a $C^*$-subalgebra of $\fc_0(G)$, $P\colon \fc_0(G)
  \rightarrow B$ be a conditional expectation. Let $Q$ denote the
  spectrum of the commutative $C^*$-algebra $B$, and identify $B$ with
  $\fc_0(Q)$ via the Gelfand transform. Let $\iota\colon \fc_0(Q)
  \cong B \rightarrow A$ denote the embedding map. Then we have the following.
  \begin{enumerate}
  \item[\textup{(i)}] There is a unique continuous surjection
    $\varphi\colon G \rightarrow Q$ such that $\iota(\tilde
    f)(p)=\tilde f(\varphi(p))$ for any $\tilde f\in \fc_0 (Q)$ and
    any $p\in G$. If $p, q \in G$ are such that
    $\varphi(p)=\varphi(q)$, then $\iota (\tilde f)(p)=\iota(\tilde
    f)(q)$.
  \item[\textup{(ii)}] Denote by $P^*\colon \meas_b(Q) \rightarrow
    \meas_b(G)$ the linear map adjoint to $P$,
    \begin{equation*}
      \pair{f}{P^*(\tilde \mu)}=\pair{P(f)}{\tilde \mu},
      \qquad \tilde \mu \in \meas_b(Q),\quad f\in \fc_b(G),
    \end{equation*}
    and let $\varphi_*\colon \meas_b(G) \rightarrow \meas_b(Q)$ be the
    linear map induced by $\varphi$,
    \begin{equation*}
      \pair{\tilde f}{\varphi_*(\mu)}=\pair{\tilde f\circ \varphi}{\mu},
      \qquad
      \tilde f\in\fc_b(Q),
      \quad \mu \in \meas_b(G).
    \end{equation*}
    Then
    \begin{equation*}
      \varphi_*\circ P^*=\id_{\meas_b(Q)}.
    \end{equation*}
  \item [\textup{(iii)}]\label{P:item:compactness} Let $s\in Q$, and
    denote
    \begin{equation*}
      O_s=\{p\in G: \varphi(p)=s\}.
    \end{equation*}
    If $P(\fcc_+(G))\subset \fcc_+(G)$ and, for $p\in G$ and $f\in
    \fcc_+(G)$, $P(f)(\varphi(p))>0$ whenever $f(p)>0$, then $O_s$ is
    compact.
    % If there exists $\tilde f\in \fc_+(Q)$ such that $\tilde f(s)>0$
    % and $\iota(\tilde f) \in \fcc_+(G)$, then $O_s$ is compact.
  \item [\textup{(iv)}] For any $s\in Q$,
    \begin{equation*}
      \supp(P^*(\varepsilon_s))\subset O_s.
    \end{equation*}
    If $P(f)(\varphi(p))>0$ whenever $f(p)>0$ for $p\in G$ and $f\in
    \fcc_+(G)$, then
    \begin{equation*}
      \supp(P^*(\varepsilon_s))= O_s.
    \end{equation*}
  \end{enumerate}
\end{proposition}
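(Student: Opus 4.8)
The plan is to prove the four items in order, the common thread being the Gelfand identification $B\cong\fc_0(Q)$ and the fact that, being a projection onto its range $B$, the map $P$ restricts to the identity on $B$. \emph{Item (i).} Here $\iota$ is an injective $*$-homomorphism of $\fc_0(Q)\cong B$ into $A=\fc_b(G)$ whose image $B$ sits inside $\fc_0(G)$. For $p\in G$ the functional $\tilde f\mapsto\iota(\tilde f)(p)$ is a $*$-homomorphism $\fc_0(Q)\to\C$; one notes that it is nonzero (no point of $G$ is annihilated by all of $B$, which one reads off from $P$ being a conditional expectation, i.e.\ from $B$ being nondegenerate in $\fc_0(G)$), so that it equals $\mathrm{ev}_{\varphi(p)}$ for a unique $\varphi(p)\in Q$. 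This defines $\varphi\colon G\to Q$ with $\iota(\tilde f)(p)=\tilde f(\varphi(p))$, uniquely characterized by this identity and manifestly constant on the fibres of $p\mapsto\mathrm{ev}_p|_B$, whence $\varphi(p)=\varphi(q)$ implies $\iota(\tilde f)(p)=\iota(\tilde f)(q)$; continuity of $\varphi$ for the Gelfand topology of $Q$ is immediate from the formula. For surjectivity I would use that injectivity of $\iota$ makes $\varphi(G)$ dense, while $\iota(\fc_0(Q))\subset\fc_0(G)$ gives $\tilde f\circ\varphi\in\fc_0(G)$ for every $\tilde f\in\fc_0(Q)$, i.e.\ $\varphi$ is proper, hence closed, so $\varphi(G)=\overline{\varphi(G)}=Q$.

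\emph{Item (ii).} This is a one-line computation: for $\tilde f\in\fc_0(Q)$ and $\tilde\mu\in\meas_b(Q)$,
\[
\pair{\tilde f}{\varphi_*(P^*(\tilde\mu))}=\pair{\tilde f\circ\varphi}{P^*(\tilde\mu)}=\pair{P(\tilde f\circ\varphi)}{\tilde\mu}=\pair{\tilde f\circ\varphi}{\tilde\mu}=\pair{\tilde f}{\tilde\mu},
\]
the third equality because $\tilde f\circ\varphi=\iota(\tilde f)\in B$ and $P|_B=\id$, the last by the Gelfand identification of $\tilde f$ with $\iota(\tilde f)$. Since elements of $\meas_b(Q)=\fc_0(Q)^*$ that agree on $\fc_0(Q)$ coincide, this gives $\varphi_*\circ P^*=\id_{\meas_b(Q)}$.

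\emph{Items (iii) and (iv).} The key point, from (i), is that every $P(f)$, lying in $B$, is constant along the fibres $O_s=\varphi^{-1}(s)$, with $P(f)(p)=P(f)(\varphi(p))$ under $B\cong\fc_0(Q)$. For (iii) I would fix $s$, pick $p_0\in O_s$ (nonempty by surjectivity) and $f_0\in\fcc_+(G)$ with $f_0(p_0)=1$; then $P(f_0)\in\fcc_+(G)$ by hypothesis, so $K=\supp P(f_0)$ is compact, and on all of $O_s$ the function $P(f_0)$ takes the positive value $P(f_0)(\varphi(p_0))$; hence $O_s\subset\{q:P(f_0)(q)\neq0\}\subset K$, and $O_s$, being closed, is compact. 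For (iv) put $\mu_s=P^*(\varepsilon_s)\geq0$. If $\varphi(p_1)\neq s$, choose $\tilde f\in\fcc_+(Q)$ with $\tilde f(\varphi(p_1))>0=\tilde f(s)$ and set $h=\iota(\tilde f)\in B$; then $h\geq0$, $h(p_1)>0$, and $\pair{h}{\mu_s}=\pair{P(h)}{\varepsilon_s}=\pair{h}{\varepsilon_s}=\tilde f(s)=0$, so $h$ vanishes on $\supp\mu_s$ and $p_1\notin\supp\mu_s$; thus $\supp\mu_s\subset O_s$. Conversely, under the stated hypothesis, for $p_0\in O_s$ and any neighbourhood $U$ of $p_0$ take $f\in\fcc_+(U)$, $0\le f\le 1$, with $f(p_0)>0$; then $\pair{f}{\mu_s}=\pair{P(f)}{\varepsilon_s}=P(f)(\varphi(p_0))>0$, so $\mu_s$ charges $U$; hence $p_0\in\supp\mu_s$ and $\supp\mu_s=O_s$.

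I expect the main obstacle to be item (i): verifying that $\iota$ is nondegenerate so that $\varphi$ is defined on all of $G$, and then that $\iota(\fc_0(Q))\subset\fc_0(G)$ forces properness, hence surjectivity, of $\varphi$. Once $\varphi$ is in hand, (ii)--(iv) are bookkeeping, the one thing to watch being the identification $B\cong\fc_0(Q)$ and the attendant constancy of each $P(f)$ along the fibres of $\varphi$.
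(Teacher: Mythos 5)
Your proof is correct and, for items (ii), (iii) and the second inclusion in (iv), it coincides with the paper's argument essentially line for line. You deviate in two places, both legitimately. For surjectivity of $\varphi$ in (i), the paper extends a character of $B$ to a character of $\fc_0(G)$ (citing Naimark), whereas you argue that $\varphi(G)$ is dense (from injectivity of $\iota$) and that $\varphi$ is proper (since $\tilde f\circ\varphi=\iota(\tilde f)\in\fc_0(G)$ for all $\tilde f\in\fc_0(Q)$), hence closed; both routes are standard, yours being somewhat more self-contained. For the inclusion $\supp(P^*(\varepsilon_s))\subset O_s$ in (iv), the paper localizes supports: for $f\in\fcc_+(\varphi^{-1}(V))$ it asserts $\supp(P(f))\subset V$ (a claim that actually requires the module property $P(\iota(\tilde g)f)=\tilde g\,P(f)$ to justify, and $V$ should be shrunk so that $s\notin\overline V$) and concludes $P(f)(s)=0$; you instead use positivity of $\mu_s=P^*(\varepsilon_s)$ together with $P\circ\iota=\id$ to produce $h=\iota(\tilde f)\geq 0$ with $\pair{h}{\mu_s}=0$ and $h(p_1)>0$. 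Your version is cleaner and avoids the unproved support claim.

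One caveat concerning the point you yourself single out as the main obstacle in (i): the nonvanishing of the character $\tilde f\mapsto\iota(\tilde f)(p)$ for every $p\in G$ does \emph{not} follow from $P$ being a conditional expectation alone. Take $G=\mathbf{Z}_2$, so $\fc_0(G)=\C^2$, let $B=\C\oplus 0$ and $P(z,w)=(z,0)$; this is a norm-one projection onto a $C^*$-subalgebra, hence a conditional expectation by Tomiyama's theorem (cited in the paper), yet every element of $B$ vanishes at the second point of $G$, so $\varphi$ cannot be defined there and (i) fails as stated. Nondegeneracy of $B$ in $\fc_0(G)$ is thus an implicit extra hypothesis; it does follow from the assumption ``$P(f)(\varphi(p))>0$ whenever $f(p)>0$'' imposed later in (iii)--(iv) and in Theorem~\ref{Th:hypergroup_from_expectation}, but not from the bare hypotheses of items (i)--(ii). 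The paper's own proof makes the same silent assumption (it sets $\varphi(p)=p\!\!\upharpoonright_B$ without checking that this restriction is a nonzero character), so this is a gap you share with, rather than introduce relative to, the source.
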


\begin{proof}
  (\emph{i}) Regarding $G$ as $\spec(\fc_0(G))$, the spectrum of the
  commutative $C^*$-algebra $\fc_0(G)$, we define
  \begin{equation*}
    \varphi(p)(\tilde
    f)=p\!\!\upharpoonright_B(\tilde f)
  \end{equation*}
   for $\tilde f \in B$ and $p\in
  \spec(\fc_0(G))$. Hence, by the definition, for $\tilde f \in
  \fc_0(Q)$ and $p\in G$, we have
  \begin{equation*}
    \iota(\tilde f)(p)=\tilde f(\varphi(p))
    =p\!\!\upharpoonright_B(\tilde f).
  \end{equation*}
  If $s\in Q=\spec(B)$, then it follows from~\cite{Naimark} that $s$
  can be extended to an element $p\in \spec(\fc_0(G))$ such that
  $p\!\!\upharpoonright_B=s$, hence $\varphi$ is onto.

  The last statement in the item is a direct implication of the
  definition of $\varphi$.

  \medskip
  \noindent  (\emph{ii}) If $\tilde f \in \fc_0(Q)$, then
  $(P\circ\iota)(\tilde f) =\tilde f$, since $P$ is a
  projection. Hence, for $\tilde f\in \fc_0(Q)$ and $\tilde \mu \in
  \meas_b(Q)$, we have
  \begin{equation*}
    \pair{\tilde f}{(\varphi_*\circ P^*)(\tilde \mu)}
    =\pair{\tilde f\circ \varphi}{P^*(\tilde \mu)}
    =\pair{\iota(\tilde f)}{P^*(\tilde \mu)}
    =\pair{(P\circ \iota)(\tilde f)}{\tilde \mu}=
    \pair{\tilde f}{\tilde \mu}.
  \end{equation*}

  \smallskip
  \noindent (\emph{iii}) Indeed, for $p\in O_s$, let $f\in
  \fcc_+(G)$ such that $f(p)=1$. Then, by the assumption,
  $P(f)(s)=P(f)(\varphi(p))>0$. Hence, for any $q\in O_s$,
  $P(f)(q)=P(f)(s)>0$, and $O_s\subset \supp(P(f))$. Since
  $O_s=\varphi^{-1}(s)$ is closed and $\supp P(f)$ is compact, $O_s$
  is compact.
% $\iota (\tilde
%   f)(p)=\tilde f(s)>0$, thus the closed set $O_s$ being a subset of
%   the compact set $ \supp(\iota(\tilde f))$, is compact itself.

  \medskip
  \noindent (\emph{iv})
  We first prove that $\supp(P^*(\varepsilon_s))\subset O_s$. Let $p\in
  \supp(P^*(\varepsilon_s))$ and assume that $p\notin O_s$, hence
  $\varphi(p)\neq s$. Choose an open neighborhood $V$ of $\varphi(p)$
  such $s\notin V$, and let $U=\varphi^{-1}(V)$. Then $p\in U$ and,
  for any $f\in \fcc_+(U)$, $\supp(P(f))\subset V$, hence
  $P(f)(s)=0$. Thus
  \begin{equation*}
    \pair{f}{P^*(\varepsilon_s)}
    =\pair{P(f)}{\varepsilon_s}=0,
  \end{equation*}
  and $p\notin \supp(P^*(\varepsilon_s))$.

  It remains to prove, with the assumption made, that $O_s\subset
  \supp(P^*(\varepsilon_s))$. Let $p\in O_s$ and $U$ be an open
  neighborhood of $p$. Choose a function $f\in \fcc_+(U)$ such that
  $f(p)=1$. By the assumption, $P(f)(s)>0$. Hence
  \begin{equation*}
    \pair{f}{P^*(\varepsilon_s)}
    =\pair{P(f)}{\varepsilon_s}
    =P(f)(s)>0,
  \end{equation*}
  and $p\in \supp P^*(\varepsilon_s)$.
\end{proof}

\begin{remark}
  In the sequel, we will identify elements of $\fc(Q)$ with those in
  $B$, according to Proposition~\ref{P:preparatory}~(i).
\end{remark}

\begin{theorem}\label{Th:hypergroup_from_expectation}
  Let $G$ be a locally compact group, $P\colon \fc_b(G)\rightarrow
  \fc_b(G)$ be a conditional expectation. Assume that $P$ satisfies
  the assumptions in Proposition~\ref{P:preparatory}~(iii), and let
  the following hold:
  \begin{equation}
    \label{eq:conditions_P}
    \begin{array}{c}
      \displaystyle
      \bigl((P\times \id)\circ \Delta \circ P\bigr)(f)
      =
      \bigl((\id \times P)\circ \Delta \circ P\bigr)(f)
      = \bigl((P\times P)\circ \Delta \bigr)(f),
      \\[2mm]
      P(\check f) = \bigl(P(f)\bigr)\check{\strut},
      % \\[2mm]
      % (\m \circ P)(f)=\m(f)
    \end{array}
  \end{equation}
  for all $f \in \fc_b(G)$, where $\Delta\colon\fc_b(G) \rightarrow
  \fc_b(G\times G)$ is the group comultiplication on $\fc_b(G)$
  defined by $(\Delta f)(p_1, p_2) = f(p_1 p_2)$,  and
  $\check f(p)=f(p^{-1})$, $p_1, p_2, p\in G$.

  Let $B=P(\fc_0(G))$, denote by $Q$ the spectrum of the
  commutative $C^*$-algebra $B$, and identify $B$ with
  $\fc_0(Q)$.
  Define $\tilde \Delta \colon \fc_b(Q) \rightarrow \fc_b(Q\times Q)$
  by
  \begin{equation*}
%    \label{eq:tilde_Delta_def}
    \tilde \Delta (\tilde f) = \bigl((P \times P)\circ \Delta\bigr) (\tilde f),
    \quad \tilde f\in \fc_b(Q).
  \end{equation*}
  For $s\in Q$, define $\check s\in Q$ by
  \begin{equation*}
%    \label{eq:tilde_*_def_and_tilde_e}
    \check s(\tilde f)= \check {\tilde f}(s),\quad \tilde f \in B.
  \end{equation*}
  Let $\tilde e\in Q$ be defined by
  \begin{equation*}
    \tilde e(\tilde f)=\tilde f(e),
    \quad \tilde f \in B,
  \end{equation*}
  where $e$ is the identity of the group $G$.

  Then $(Q, \tilde e,\, \check{}\,)$ is a locally compact hypergroup
  with comultiplication $\tilde \Delta$.

  If $m\circ P=m$, where $m$ is a left Haar measure on the group
  $G$, then $\tilde m\in \meas(Q)$ defined by $\tilde m
  =\varphi_*(m)=m\!\!\restriction_B$ is a left Haar measure on $Q$.
\end{theorem}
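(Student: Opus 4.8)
The plan is to verify, for the triple $(\tilde\Delta,\check{}\,,\epsilon)$ with $\epsilon(\tilde f)=\tilde f(\tilde e)$, the conditions $(\widetilde H_1)$--$(\widetilde H_7)$ of Proposition~\ref{P:hypergroup->C^*-algebra}, and then to invoke Proposition~\ref{P:C^*-algebra->hypergroup}, which produces the hypergroup structure on $\meas_b(Q)$ with comultiplication $\tilde\Delta$ and $e=\supp\epsilon=\tilde e$. First I record three facts. (a) Being a norm-one idempotent, $P$ is unital and positive and fixes its range, which contains $B$; hence $P|_B=\id$, so by~\eqref{eq:conditions_P} the maps $(P\times P)\circ\Delta$, $(P\times\id)\circ\Delta\circ P$, $(\id\times P)\circ\Delta\circ P$ coincide (call the common map $\Phi$), $\Phi=\Phi\circ P$, and on $B$ one has $\tilde\Delta=(P\times\id)\circ\Delta=(\id\times P)\circ\Delta$; also the second identity in~\eqref{eq:conditions_P} reads $P\circ\check{}\,=\check{}\,\circ P$, so the group antipode leaves $B$ invariant and descends to the homeomorphism $\check{}\,$ of $Q$ from the theorem, an involution with $\check{\tilde e}=\tilde e$. (b) The hypotheses of Proposition~\ref{P:preparatory}(iii) are assumed, so $\varphi$ has compact fibres $O_s:=\varphi^{-1}(s)$; moreover $\varphi$ is proper: given a compact $K\subset Q$, cover it by finitely many open sets on which functions $P(h_i)$ with $h_i\in\fcc_+(G)$ are bounded below by positive constants $c_i$, put $h=\sum_i h_i/c_i\in\fcc_+(G)$; then $P(h)\in\fcc_+(G)$ and $P(h)>1$ on $\varphi^{-1}(K)$, so $\varphi^{-1}(K)\subset\supp P(h)$ is compact. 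Hence $\varphi$ is closed and $s\mapsto O_s$ is upper semicontinuous. (c) By Proposition~\ref{P:preparatory}(iv), $P^*\varepsilon_s\in\meas_p(G)$ with $\supp(P^*\varepsilon_s)=O_s$, so for $f\in\fcc(Q)$ one has $(\tilde\Delta f)(s,t)=\pair{f}{\varphi_*\bigl((P^*\varepsilon_s)*(P^*\varepsilon_t)\bigr)}$ with $*$ the group convolution; thus the convolution on $\meas_b(Q)$ given by Proposition~\ref{P:C^*-algebra->hypergroup} is $\varepsilon_s*\varepsilon_t=\varphi_*\bigl((P^*\varepsilon_s)*(P^*\varepsilon_t)\bigr)\in\meas_p(Q)$, with $\supp(\varepsilon_s*\varepsilon_t)=\varphi(O_sO_t)$ compact, and $O_{\check s}=O_s^{-1}$, so $e\in O_sO_t$ iff $t=\check s$.

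The algebraic conditions then follow. For $(\widetilde H_1)$, a short computation using group coassociativity and $\Phi=\Phi\circ P$ shows both iterated comultiplications restricted to $B$ equal $(P\times P\times P)\circ(\Delta\times\id)\circ\Delta$. For $(\widetilde H_2)$: positivity of $\tilde\Delta$ and $\tilde\Delta\,1_Q=1_{Q\times Q}$ are immediate from positivity and unitality of $P$ and of the group comultiplication; for (c), Urysohn's lemma on the locally compact space $Q$ gives $f\in\fcc_+(Q)$ with $0\leq f\leq1$ and $f\equiv1$ on the compact set $\varphi(O_sO_t)$, whence $(\tilde\Delta f)(s,t)=\pair{f}{\varepsilon_s*\varepsilon_t}=1$. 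For $(\widetilde H_5)$, regarding functions on $Q$ as $\varphi$-invariant functions on $G$, one computes $\bigl((\epsilon\times\id)\circ\tilde\Delta\bigr)(f)(\varphi(q))=\bigl((\id\times P)\circ\Delta\bigr)(f)(e,q)=P\bigl(x\mapsto f(ex)\bigr)(q)=P(f)(q)=f(\varphi(q))$, using $(P\times P)\circ\Delta=(\id\times P)\circ\Delta$ on $B$ and $P(f)=f$; the right counit is symmetric. For $(\widetilde H_6)$: $\check{}\,^2=\id$ and $\fcc(Q)\,\check{}\,\subset\fcc(Q)$ hold because $\check{}\,$ is a homeomorphism of $Q$, and the identity $\tilde\Delta\circ\check{}\,=\Sigma\circ(\check{}\,\times\check{}\,)\circ\tilde\Delta$ is obtained by carrying the group identity $\Delta\circ\check{}\,=\Sigma\circ(\check{}\,\times\check{}\,)\circ\Delta$ through $(P\times P)$ and using $P\circ\check{}\,=\check{}\,\circ P$.

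The two topological conditions use the upper semicontinuity of $s\mapsto O_s$, hence of $(s,t)\mapsto\varphi(O_sO_t)$. In $(\widetilde H_4)$, the hypotheses force $\varphi(O_{s_0}O_{t_0})\cap F=\emptyset$; by upper semicontinuity there is a neighbourhood $U$ of $(s_0,t_0)$ keeping $\varphi(O_sO_t)$ inside a fixed open set with compact closure disjoint from $F$, and a Urysohn function equal to $1$ there and to $0$ on $F$ is the required $f$. In $(\widetilde H_7)$, continuity of each $\tilde\Delta f$ (immediate, as $\tilde\Delta f\in\fc_b(Q\times Q)$) together with the upper semicontinuity yields $\bigcap_{f\in\fcc_+^{\tilde e}(Q)}\supp(\tilde\Delta f)=\{(s,t):\tilde e\in\varphi(O_sO_t)\}$; the inclusion $\Gamma\,\check{}\,\subset$ this set follows from $e\in O_sO_{\check s}$, and for the reverse one reduces, via $O_{\check s}=O_s^{-1}$, to the implication $\varphi(pq)=\varphi(e)\Rightarrow\varphi(q)=\varphi(p^{-1})$, a cancellation property of the orbital-type map $\varphi$. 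I expect this last implication to be the main obstacle: it is here that the full strength of~\eqref{eq:conditions_P} must be exploited, in the spirit of the theory of orbital morphisms. A secondary delicate point is making sense of the slice maps $(P\times P)\circ\Delta$ and $(P\times P\times P)\circ(\Delta\times\id)\circ\Delta$ on bounded continuous functions on $G\times G$ and $G\times G\times G$ and applying~\eqref{eq:conditions_P} there; this I would handle by systematically replacing abstract tensor products with the measures $P^*\varepsilon_s$.

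For the last assertion, assume $m\circ P=m$. Since $\varphi$ is proper and $m$ is Radon, $\tilde m=\varphi_*(m)$ is a nonzero Radon measure on $Q$, coinciding after the identification with the restriction of the functional $m$ to $B$. For $f\in\fcc(Q)$ and $s\in Q$, choosing $p\in O_s$,
\begin{equation*}
  \pair{f}{\varepsilon_s*\tilde m}=\int_Q(\tilde\Delta f)(s,t)\,d\tilde m(t)=\int_G P\bigl(x\mapsto f(px)\bigr)(q)\,dm(q)=(m\circ P)\bigl(x\mapsto f(px)\bigr)=\int_G f(pq)\,dm(q)=\int_G f\,dm=\pair{f}{\tilde m},
\end{equation*}
where the second equality uses $(\tilde\Delta f)(s,\varphi(q))=P\bigl(x\mapsto f(px)\bigr)(q)$ from~\eqref{eq:conditions_P} and (c), the fourth uses $m\circ P=m$, and the fifth is left invariance of the Haar measure $m$ on $G$. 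Hence $\varepsilon_s*\tilde m=\tilde m$ for all $s\in Q$, i.e. $\tilde m$ is a left invariant measure on the hypergroup $Q$.
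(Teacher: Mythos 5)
Your overall strategy---verifying $(\widetilde H_1)$--$(\widetilde H_7)$ for $(\tilde\Delta,\check{}\,,\epsilon)$ and invoking Proposition~\ref{P:C^*-algebra->hypergroup}---is exactly the paper's, and most of your verifications are sound. Your handling of $(\widetilde H_4)$ via properness of $\varphi$ and upper semicontinuity of $(s,t)\mapsto\varphi(O_sO_t)$ is in fact more carefully justified than the paper's set-theoretic argument with the set $E$, and your direct computation of $\varepsilon_s*\tilde m=\tilde m$ is a legitimate (and cleaner) alternative to the paper's verification of the adjoint-type identity for left invariance. However, you leave a genuine gap precisely where you suspect one: the converse half of $(\widetilde H_7)$, which you reduce to the implication $\varphi(pq)=\varphi(e)\Rightarrow\varphi(q)=\varphi(p^{-1})$ and then explicitly decline to prove. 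As written, the proposal therefore does not establish axiom $(H_7)$, and the theorem is not proved.

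The missing ingredient is that $O_sO_t=\supp\bigl(P^*(\varepsilon_s)*_GP^*(\varepsilon_t)\bigr)$ is saturated for $\varphi$, and this does follow from~\eqref{eq:conditions_P}. Indeed, the first line of~\eqref{eq:conditions_P} together with $P^2=P$ gives $(P\times P)\circ\Delta\circ P=(P\times P)\circ\Delta$, hence for every $f\in\fc_b(G)$
\begin{equation*}
  \pair{P(f)}{P^*(\varepsilon_s)*_GP^*(\varepsilon_t)}
  =\pair{(P\times P)\Delta P(f)}{\varepsilon_s\otimes\varepsilon_t}
  =\pair{(P\times P)\Delta f}{\varepsilon_s\otimes\varepsilon_t}
  =\pair{f}{P^*(\varepsilon_s)*_GP^*(\varepsilon_t)},
\end{equation*}
so that $P^*(\varepsilon_s)*_GP^*(\varepsilon_t)=P^*\bigl(\varphi_*(P^*(\varepsilon_s)*_GP^*(\varepsilon_t))\bigr)=P^*(\varepsilon_s*\varepsilon_t)$; by the argument of Proposition~\ref{P:preparatory}~(iv) the support of this measure is the full preimage $\varphi^{-1}\bigl(\supp(\varepsilon_s*\varepsilon_t)\bigr)$. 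Consequently $\tilde e\in\supp(\varepsilon_s*\varepsilon_t)$ forces $O_{\tilde e}\subset O_sO_t$, in particular $e\in O_sO_t$, and now the elementary cancellation you were missing is available: writing $e=ab$ with $a\in O_s$ and $b\in O_t$ gives $a=b^{-1}\in O_t^{-1}=O_{\check t}$, whence $s=\varphi(a)=\check t$. This is exactly how the paper argues---it reduces $(H_7)$ to the statement that $e\in O_s\cdot O_{\check t}$ if and only if $s=t$, the saturation property above being the tacit justification of its phrase ``it is sufficient to prove.'' Supplying this step would complete your proof; without it the argument is incomplete.
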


\begin{proof}
  To prove the theorem, using
  Proposition~\ref{P:C^*-algebra->hypergroup}, we will check the
  conditions ($\tilde H_1$)---($\tilde H_7$) in
  Proposition~\ref{P:hypergroup->C^*-algebra}.

  \smallskip

  ($\tilde H_1$) We have
  \begin{align*}
    (\tilde \Delta \times \id)\circ \tilde \Delta&=
    (P\times P\times \id) \circ (\Delta \times \id) \circ (P \times P)
    \circ \Delta\\
    &=(P\times P\times P)\circ (\Delta \times \id) \circ \Delta.
  \end{align*}
  On the other hand,
  \begin{align*}
    (\id \times \tilde \Delta)\circ \tilde \Delta&=
    (\id \times P \times P) \circ (\id \times \Delta ) \circ
    (P\times P)\circ \Delta\\
    &=(P\times P\times P)\circ (\id \times \Delta)\circ \Delta.
  \end{align*}
  Now, the result follows from coassociativity of $\Delta$.

  \smallskip

  ($\tilde H_2$)~(b)
  Since $P(1_G)=1_Q$ and $\Delta 1_G=1_{G\times G}$, the claim is immediate.

  ($\tilde H_2$)~(a) It is clear that $\tilde \Delta$ is
  positive, being a composition of the positive maps $\Delta$ and $P\times P$.
  It is also immediate from positivity and ($\tilde H_2$)~(b) that
  $\tilde \Delta(\tilde f) \in \fc_{b,+}(Q\times Q)$ for $\tilde f \in
  \fc_{b,+}(Q)$.

  ($\tilde H_2$)~(c) For any $\tilde f \in \fcc_+(Q)$ and $s,t\in Q$, we have
  \begin{align*}
    (\tilde \Delta\, f)(s,t)
    &=\pair{\tilde \Delta\, \tilde f}{\varepsilon_s\otimes \varepsilon_t}
    =\pair{\bigl((P\times P)\circ \Delta\bigr)(\tilde f)}
    {\varepsilon_s \otimes \varepsilon_t}\\
    &=\pair{\Delta\, \tilde f}{P^*(\varepsilon_s)\otimes P^*(\varepsilon_t)}
    =\pair{\tilde f}{P^*(\varepsilon_s)*_{G} P^*(\varepsilon_t)},
  \end{align*}
  where $*_G$ is the convolution of measures on $G$ defined by
  \begin{equation*}
    \pair{f}{\mu *_{G} \nu}=\int_{G^2} f(pq)\, d\mu(p)d\nu(q),
    \qquad f\in \fc_b(G),
    \quad
    \mu,\nu \in \meas_c(G).
  \end{equation*}
  It follows from Proposition~\ref{P:preparatory} that
  $P^*(\varepsilon_s)$ and $P^*(\varepsilon_t)$ are compact, hence the
  set $F=\supp(P^*(\varepsilon_s)*_G P^*(\varepsilon_t))$ is also
  compact. It also follows from the first identity
  in~(\ref{eq:conditions_P}) that, if $\tilde f=P(f)$, $f\in
  \fc_b(G)$, then
  \begin{equation*}
    \pair{\tilde f}{P^*(\varepsilon_s)*_GP^*(\varepsilon_t)}
    =\pair{f}{P^*(\varepsilon_s)*_GP^*(\varepsilon_t)}.
  \end{equation*}
  Hence, let $f\in\fcc_+(G)$ be such that $f(p)=1$ for all $p \in
  \supp(P^*(\varepsilon_s)*_GP^*(\varepsilon_t))$. Then, by the
  assumption, $\tilde f=P(f)\in \fcc_+(Q)$, and
  \begin{align*}
    \pair{\tilde f}{P^*(\varepsilon_s)*_GP^*(\varepsilon_t)}
    &=\pair{ f}{P^*(\varepsilon_s)*_GP^*(\varepsilon_t)}
    =\pair{1_G}{P^*(\varepsilon_s)*_GP^*(\varepsilon_t)}\\
    &=\pair{1_{Q\times Q}}{\varepsilon_s\otimes \varepsilon_t}=1.
  \end{align*}

  \smallskip ($\tilde H_4$) Let $F\subset Q$ be closed, $(s_0,t_0)\in
  Q\times Q$, and $\tilde f_0 \in \fcc_+(Q)$ be such that $\|\tilde
  f_0\|\leq 1$, $f_0(r)=0$ for $r\in F$ and $(\tilde \Delta \, \tilde
  f_0)(s_0,t_0)=1$. Since $\tilde \Delta \, \tilde f_0\in
  \fc_+(Q\times Q)$ there is a neighborhood $U\subset Q\times Q$ of
  the point $s_0,t_0$ such that $(\tilde \Delta\, \tilde
  f_0)(s,t)>\frac{1}{2}$ for all $(s,t)\in U$. This means that
  \begin{equation*}
    \pair{\tilde f_0}{P^*(\varepsilon_s)*_G P^*(\varepsilon_t)} >\frac{1}{2},
    \quad
    (s,t)\in U.
  \end{equation*}
  This means that the set $E$,
  \begin{equation*}
    E=\bigcup_{(s,t)\in U}\supp(P^*(\varepsilon_s)*_G P^*(\varepsilon_t))\subset
    \supp \tilde f_0,
  \end{equation*}
  is compact. Choose $\tilde f\in \fcc_+(Q)$ to be such that $\tilde
  f(r)=1$ if $r\in E$, and $\tilde f(r)=0$ if $r\in F$. This function
  satisfies the condition in ($\tilde H_4$).

  \smallskip

  ($\tilde H_5$)
  For $\tilde f \in\fc_0(Q)$, we have
  \begin{align*}
    (\epsilon \times\id) \circ \tilde \Delta (\tilde f)&=
    (\epsilon \times \id) \circ (P\times P) \circ \Delta (P(\tilde f))\\
    &=
    (\epsilon \times P)\circ \Delta (P(\tilde f))=P(P(\tilde f))=\tilde f.
  \end{align*}

  \smallskip

  ($\tilde H_6$) First of all note that $\check {\tilde f}
  \in \fcc(Q)$ for $\tilde f \in \fcc(Q)$, since $P(\check {\tilde
    f})=P(\tilde f)\,\check\strut=\check {\tilde f}$
  by~(\ref{eq:conditions_P}).

  It is clear that the first identity
  in~(\ref{eq:involution-property}) holds. To prove the second
  identity in~(\ref{eq:involution-property}), consider
  \begin{multline*}
    (\tilde \Delta \check {\tilde f} )(s,t)= \bigl((P\times P)\circ \Delta
    \check {\tilde f}\bigr)(s,t)= \bigl((P\times P)\circ\Sigma\circ
    (\ \check{}\  \times\ \check{}\ )\circ \Delta \bigr)(\tilde f)(s,t)
    \\
    = \bigl((P\times P)\circ \Delta \bigr)(\tilde f)(\check t,\check s)= (\tilde \Delta\,
    \tilde f)(\check t,\check s)
  \end{multline*}
  for $s,t \in  Q$, $\tilde f \in B$.

  \smallskip ($\tilde H_7$) We will prove that ($H_7$) holds true. To
  this end, it is sufficient to prove that $e\in O_s\cdot O_{\check
    t}$, $s,t\in Q$, if and only if $s= t$, since the set $O_s\cdot
  O_{\check t}$ is compact because such are $O_s$ and $O_{\check
    t}$. Since $O_{\check t}=O_t^{-1}$, $p^{-1}\in O_{\check s}$ for
  $p\in O_s$, hence $e\in O_s\cdot O_{\check s}$. On the other hand,
  if $e\in O_s\cdot O_{\check t}$, then there is $p\in O_s\cap O_t$,
  that is, $s=t$.

  \smallskip

  Let us prove that $\tilde m = \varphi_*(m)$ is a left
  invariant measure on $Q$, that is, for any $\tilde f, \tilde g\in \fcc(Q)$ the
  following relation holds:
  \begin{equation}\label{eq:Haar_measure_def}
    (\id \times \tilde m)\bigl( \tilde \Delta (\tilde f)\cdot (1\otimes
    \tilde g)\bigr)
    = (\id\times \tilde m)\bigl( (1\otimes \tilde f)\cdot
    (\, \check{}\,\times \id)\circ  \tilde \Delta (\tilde g)\bigr).
  \end{equation}
  Consider the left-hand side of~\eqref{eq:Haar_measure_def}, use the
  definitions of $\tilde m$ and $\tilde \Delta$, and let $\tilde f=P
  f$, $\tilde g=Pg$, $f,g \in \fcc(G)$,
  \begin{multline*}
    (\id \times m) \bigl(((P\times P) \circ \Delta (Pf)) \cdot
    (1\otimes Pg)\bigr)
    % = (\id \times m)\bigl((\id \times P)\circ
    % \Delta (Pf) \cdot (1\otimes Pg)\bigr)
    \\
    = (\id \times m)\bigl((P \times \id)\circ \Delta (Pf) \cdot
    (1\otimes Pg)\bigr)
    \\
    = P\bigl( (\id \times m)( \Delta (Pf) \cdot
    (1\otimes Pg))\bigl).
  \end{multline*}
  Now using a relation similar to (\ref{eq:Haar_measure_def}) that
  holds for $m$ on $G$ and that $P\circ\,\check{}\,= \check{}\,\circ P $
  we get
  \begin{multline*}
    P\bigl( (\id \times m)( \Delta (Pf) \cdot (1\otimes Pg))\bigl)
    \\
    =P\bigl((\id \times m)((1\otimes Pf)\cdot (\,\check{}\, \times \id)\circ
    \Delta (Pg))\bigr)
    \\
    = (\id \times m)\bigl((1\otimes Pf)\cdot (\,\check{}\, \circ P\times
    \id)\circ \Delta (Pg)\bigr)
    \\
    = (\id \times m)\bigl((1\otimes Pf) \cdot (\,\check{}\, \circ P \times
    P)\circ \Delta (Pg)\bigr)
    \\
    = (\id \times m\circ P) \bigl(
    (1\otimes Pf)\cdot (\,\check{}\, \times P)\circ \Delta (P g)\bigr)
    \\
    = (\id \times \tilde m) \bigl( (1\otimes Pf)\cdot (\,\check{}\, \times
    \id)\circ \tilde \Delta (Pg)\bigr).
  \end{multline*}

\end{proof}

\begin{lemma}
  Let $\pair{ P f}{m}=\pair{f}{m}$ for all $f\in \fcc(G)$. Then the
  conditional expectation $P\colon \fcc(G) \rightarrow B$ can be
  extended by continuity to an idempotent on $L_1(G,m)$ and an
  orthogonal projection on $L_2(G, m)$, still denoted by $P$.
\end{lemma}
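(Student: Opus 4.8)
The plan is to verify the two claims—that $P$ extends to an idempotent on $L_1(G,m)$ and to an orthogonal projection on $L_2(G,m)$—more or less independently, using the density of $\fcc(G)$ in both $L_1$ and $L_2$ and the hypothesis $\pair{Pf}{m}=\pair{f}{m}$ together with the conditional-expectation axioms (i)--(v). First I would establish the $L_1$ bound: for $f\in\fcc(G)$, write $\|Pf\|_{L_1}=\pair{|Pf|}{m}$; using positivity of $P$ (axiom (ii)) and the Kadison–Schwarz-type inequality, one gets $|Pf|\le P(|f|)$ pointwise (this is the standard consequence of (ii)+(iv) for a conditional expectation onto a commutative subalgebra, where it can be seen directly by evaluating at a character), hence $\pair{|Pf|}{m}\le\pair{P(|f|)}{m}=\pair{|f|}{m}=\|f\|_{L_1}$ by the hypothesis. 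So $P$ is an $L_1$-contraction on the dense subspace $\fcc(G)$ and extends by continuity; idempotency $P^2=P$ passes to the closure since both sides are continuous in $L_1$.

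Next I would handle $L_2$. The key point is self-adjointness of $P$ with respect to the inner product $\inner{f}{g}=\pair{f\bar g}{m}$. For $f,g\in\fcc(G)$, I want $\inner{Pf}{g}=\inner{f}{Pg}$. Expand $\inner{Pf}{g}=\pair{(Pf)\,\overline g}{m}$; now $g=Pg+(g-Pg)$, and since $P$ is a conditional expectation onto $B=\fc_0(Q)$ with the module property (iii), $(Pf)\cdot\overline{Pg}=P\bigl((Pf)\overline g\bigr)$ while the "off-diagonal" piece $(Pf)\overline{(g-Pg)}$ integrates to something one kills using $\pair{P(\cdot)}{m}=\pair{\cdot}{m}$: applying $P$ and using (iii), $P\bigl((Pf)\overline{(g-Pg)}\bigr)=(Pf)\cdot P(\overline{g-Pg})=(Pf)\cdot\overline{(Pg-Pg)}=0$ (using (v), $P(\overline h)=\overline{P(h)}$), so its integral against $m$ vanishes. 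Thus $\inner{Pf}{g}=\pair{P\bigl((Pf)\overline g\bigr)}{m}=\pair{(Pf)\overline g}{m}$ reduces to the symmetric expression $\pair{(Pf)(\overline{Pg})}{m}$, and symmetrically $\inner{f}{Pg}$ gives the same thing. Combined with the $L_2$-contractivity $\|Pf\|_{L_2}^2=\inner{Pf}{Pf}\le\inner{P(f^*f)}{m}=\pair{P(f\overline f)}{m}=\pair{f\overline f}{m}=\|f\|_{L_2}^2$ from axiom (iv) and the hypothesis, $P$ extends to a bounded self-adjoint operator on $L_2(G,m)$; idempotency again passes to the limit, and a bounded self-adjoint idempotent is an orthogonal projection.

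The main obstacle is justifying the pointwise inequality $|Pf|\le P(|f|)$ (for the $L_1$ step) and the identity $P\bigl((Pf)\overline g\bigr)=(Pf)P(\overline g)$ together with $P(\overline g)=\overline{P(g)}$ cleanly at the level of $\fcc(G)$, i.e. checking that all these manipulations stay inside the domain where (i)--(v) literally apply; since $\fcc(G)$ is an ideal in $A=\fc_b(G)$ and $B\subset\fc_0(G)$, the products $(Pf)\overline g$ and $(Pf)\overline{Pg}$ lie in $\fc_0(G)$ (indeed in $\fcc(G)$ after using that $Pf$ need not have compact support—here one may need the extra hypothesis $P(\fcc_+(G))\subset\fcc_+(G)$ from Proposition~\ref{P:preparatory}, or else argue by approximation), so axioms (ii)--(v) and the hypothesis $\pair{P(\cdot)}{m}=\pair{\cdot}{m}$ are applicable. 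Once that bookkeeping is done, the extension and the idempotent/projection properties follow by routine density and continuity arguments, which I would not spell out in detail.
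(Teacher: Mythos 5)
Your proposal is correct and follows essentially the same route as the paper's proof: both obtain $L_1$- and $L_2$-contractivity from $|Pf|\le P(|f|)$ and $|Pf|^2\le P(|f|^2)$ combined with the hypothesis $\pair{Pf}{m}=\pair{f}{m}$, and both establish self-adjointness on $L_2$ by using the module property (iii) together with $m\circ P=m$ to reduce $\inner{Pf}{g}$ and $\inner{f}{Pg}$ to the common expression $\pair{(Pf)\,\overline{Pg}}{m}$. Your ``off-diagonal piece'' decomposition $g=Pg+(g-Pg)$ is only a cosmetic reorganization of the paper's direct two-step computation, and your extra bookkeeping (that products such as $(Pf)\overline g$ stay in $\fcc(G)$, where the hypotheses literally apply) is a harmless refinement the paper leaves implicit.
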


\begin{proof}
  Let $f\in \fcc(G)$. Since $-|f|\leq f \leq |f|$ and $P$ preserves
  the cone of nonnegative functions, we have $|P(f)|\leq P(|f|)$. Hence,
  \begin{align*}
    \|P(f)\|_1&=\int_G|P(f)|(p)\, dm(p)\leq \int_GP(|f|)(p)\, dm(p)\\
    &=\int_G |f|(p)\, dm(p)=\|f\|_1,
  \end{align*}
  which shows that $P$ is continuous with respect to the $L_1$-norm.

  Now, since $|P(f)|^2\leq P(|f|^2)$, we have
  \begin{align*}
    \|P(f)\|^2_2&=\int_G |P(f)(p)|^2\, dm(p)\leq \int_G P(|f|^2)(p)\, dm(p)\\
    &=
    \int_G |f(p)|^2\, dm(p)=\|f\|_2^2.
  \end{align*}
  Hence, $P$ is continuous with respect to the $L_2$-norm on
  $\fcc(G)$.

  It is clear that the continuous extension of $P$ to $L_2(G, m)$ is a
  projection. To see that it is orthogonal, let $f_1, f_2 \in \fcc(G)$
  and consider
  \begin{multline*}
    (Pf_1, f_2)_{L_2(G, m)} = \int_G (Pf_1)(p)\overline{f_2(p)}\, dm(p)
    = \int_G P\bigl((Pf_1)(p)\overline{f_2(p)}\bigr)\, d m(p)\\
    = \int_G (Pf_1)(p) \overline{(Pf_2)(p)}\, dm(p)
    = \int_G P(f_1(p) \overline{(Pf_2)(p)})\, dm(p) \\
    = \int_G f_1(p)\overline{(Pf_2)(p)}\, dm(p) = (f_1, Pf_2)_{L_2(G, m)}.
  \end{multline*}
\end{proof}

\begin{remark}
  In what follows, we will identify the space $L_1(Q, \tilde m)$
  (resp., $L_2(Q, \tilde m)$) with a closed subspace of $L_1(G,m)$
  (resp., $L_2(G, m)$).
\end{remark}

\medskip

For a hypergroup $Q$, consider the product hypergroup $Q\times
Q$~\cite{bloom_heyer91:_harmon_analy}, and let $\delta\colon \meas_b
(Q)\rightarrow \meas_b(Q\times Q)$ denote a linear extension of the
map defined by
\begin{equation}
  \label{eq:delta-def-dirac}
  \delta(\varepsilon_s)=\varepsilon_s\otimes \varepsilon_s,
  \quad s\in Q,
\end{equation}
that is, for  $\mu \in \meas_b(Q)$ and $F \in \fcc(Q\times Q)$,
\begin{equation*}%\label{eq:delta-def-general}
  \pair{\delta(\mu)}{F}=\int_QF(s,s)\, d\mu(s).
\end{equation*}

\begin{definition}
  Let $B$ be a $*$-algebra and $A$ a $C^*$-algebra. A linear map
  $\phi\colon B \rightarrow A$ will be called \emph{positive}, if
  $\phi(b^* b)\geq0$ for all $b \in B$, and \emph{completely
    positive}, if the map $\id \otimes \phi\colon \Mat_n(\Bbb C) \otimes B
  \rightarrow \Mat_n(\Bbb C)\otimes A$ is positive for all $n\in \N$,
  where $\Mat_n(\Bbb C)$ is the $C^*$-algebra of $(n\times n)$-matrices
  over $\Bbb C$.
\end{definition}

It follows from~\cite{takesaki03_i:_theor} that a map $\phi: B\to A$ from
a $*$-algebra $B$ into a $C^\ast$-algebra $A$ is completely
positive if and only if, for any $n\in \N$, $b_i\in B$ and $a_i\in A$,
$i=1,\dots, n$, we have
\begin{equation}
  \label{eq:Takesaki-complete-positivity}
  \sum_{i,j=1}^n a_j^* \phi(b_j^\star b_i) a_i \geq 0.
\end{equation}

\begin{theorem}\label{T:complete-positivity}
  Let $P$ be a conditional expectation on $A_0 = {\mathcal C}_0 (Q)$ that satisfy all the
  conditions in Theorem \ref{Th:hypergroup_from_expectation}. Let $Q$
  be the corresponding hypergroup. Denote by $\lambda_Q$ the left
  regular representation of $\meas_b(Q)$ on $L_2(Q)$,
  \begin{equation*}
    (\lambda_Q(\tilde \mu)\act \tilde f)(t)
    =\int_Q \pair{\tilde f}{\check \varepsilon_{ s}*_Q\varepsilon_t}\,
    d\tilde \mu(s),
    \qquad \tilde \mu \in \meas_b(Q),
    \quad \tilde f \in L_2(Q),
    \quad t\in Q,
  \end{equation*}
  and let $\delta$ be defined by~(\ref{eq:delta-def-dirac}). Then the
  linear map
  \begin{equation*}
    (\lambda_Q\otimes\lambda_Q)\circ \delta\colon L_1(\tilde m)
    \rightarrow B(L_2(Q)\otimes L_2(Q))
  \end{equation*}
 is completely positive.
\end{theorem}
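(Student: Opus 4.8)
The plan is to verify complete positivity via Takesaki's criterion \eqref{eq:Takesaki-complete-positivity}: it suffices to show $\sum_{i,j}a_j^*\bigl((\lambda_Q\otimes\lambda_Q)\circ\delta\bigr)(\tilde f_j^\star *_Q\tilde f_i)\,a_i\ge 0$ for all finite families $\tilde f_1,\dots,\tilde f_n\in L_1(\tilde m)$ and $a_1,\dots,a_n\in B(L_2(Q)\otimes L_2(Q))$, and by density one may take $\tilde f_i\in\fcc(Q)$. The first step is to transfer everything to the group. Using the relations \eqref{eq:conditions_P} one checks that $P^*\colon\meas_b(Q)\to\meas_b(G)$ is a $*$-homomorphism of the convolution $*$-algebras: the first identity gives $\pair{Pf}{\tilde\mu_1*_Q\tilde\mu_2}=\pair{\Delta f}{P^*(\tilde\mu_1)\otimes P^*(\tilde\mu_2)}$ for $f\in\fcc(G)$, i.e. $P^*(\tilde\mu_1*_Q\tilde\mu_2)=P^*(\tilde\mu_1)*_GP^*(\tilde\mu_2)$, and the second gives $P^*(\check{\tilde\mu})=\check{P^*(\tilde\mu)}$; under the identification of $L_1(\tilde m)$ with $P(L_1(G,m))$ from the Lemma (using $m\circ P=m$ and the module property of $P$) the map $\iota$ of Proposition~\ref{P:preparatory} restricts to an injective $*$-homomorphism $\iota\colon L_1(\tilde m)\to L_1(G,m)$, with $P^*(\tilde f\tilde m)=(\iota\tilde f)\,m$. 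Since $\tilde m=\varphi_*(m)$ and the integrand below depends on $g$ only through $\varphi(g)$, we get, for $\tilde g\in L_1(\tilde m)$,
\[
\bigl((\lambda_Q\otimes\lambda_Q)\circ\delta\bigr)(\tilde g)=\int_Q \tilde g(s)\,\lambda_Q(\varepsilon_s)\otimes\lambda_Q(\varepsilon_s)\,d\tilde m(s)=\int_G (\iota\tilde g)(g)\,\Phi(g)\,dm(g),
\]
where $\Phi(g):=\lambda_Q(\varepsilon_{\varphi(g)})\otimes\lambda_Q(\varepsilon_{\varphi(g)})$. As $\iota$ is a $*$-homomorphism, $\iota(\tilde f_j^\star*_Q\tilde f_i)=(\iota\tilde f_j)^{\star}*_G(\iota\tilde f_i)$, so the required inequality becomes $\sum_{i,j}a_j^*\bigl(\int_G ((\iota\tilde f_j)^{\star}*_G(\iota\tilde f_i))(g)\,\Phi(g)\,dm(g)\bigr)a_i\ge 0$; hence it suffices to prove that $\Phi$ is a completely positive definite $B(L_2(Q)\otimes L_2(Q))$-valued function on the \emph{group} $G$.

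Next one localizes the problem further. From $P^*$ being a $*$-homomorphism, together with $m\circ P=m$, one obtains the compression formula $\lambda_Q(\tilde\mu)=P\,\lambda_G(P^*\tilde\mu)\,P$, where on the right $P$ denotes the orthogonal projection $L_2(G,m)\to L_2(\tilde m)$ of the Lemma and $\lambda_G$ the left regular representation of $G$; in particular, writing $\nu_s=P^*(\varepsilon_s)$,
\[
\Phi(g)=(P\otimes P)\bigl(\lambda_G(\nu_{\varphi(g)})\otimes\lambda_G(\nu_{\varphi(g)})\bigr)(P\otimes P).
\]
Conjugation by the fixed operator $P\otimes P$ preserves complete positive definiteness, so it is enough to show that $g\mapsto\lambda_G(\nu_{\varphi(g)})\otimes\lambda_G(\nu_{\varphi(g)})$ is completely positive definite on $G$. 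Here the group structure is decisive: the analogous function for $G$ itself, $g\mapsto\lambda_G(g)\otimes\lambda_G(g)$, is completely positive definite, being a unitary representation of $G$ — equivalently, $(\lambda_G\otimes\lambda_G)\circ\delta_G$ is a $*$-representation of $\meas_b(G)$, because the diagonal $\delta_G\colon\varepsilon_g\mapsto\varepsilon_g\otimes\varepsilon_g$ \emph{is} a $*$-homomorphism on a group. So the whole difficulty is concentrated in passing from $\lambda_G(g)\otimes\lambda_G(g)$ to $\lambda_G(\nu_{\varphi(g)})\otimes\lambda_G(\nu_{\varphi(g)})$.

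This is the main obstacle: $\delta$ is not a morphism of hypergroups (the diagonal $s\mapsto\varepsilon_s\otimes\varepsilon_s$ does not intertwine the convolutions of $Q$ and $Q\times Q$), $\nu_{\varphi(g)}=P^*(\varepsilon_{\varphi(g)})$ is an average over the fibre $O_{\varphi(g)}=\varphi^{-1}(\varphi(g))$ rather than a point mass, and averages of completely positive definite functions are not completely positive definite in general. The resolution exploits the orbital-morphism picture behind the conditional expectation (cf.\ \cite{Jewett}, and the remark in the Introduction): the conditions \eqref{eq:conditions_P} force the fibres $O_s$ to behave like orbits under a group action on $G$, and accordingly the unitaries $\lambda_G(h)$, $h\in O_s$, are mutually conjugate in $B(L_2(G))$ by a measurable field $\{\theta_h\}$ of unitaries arising from that structure (in the orbital situation these are the unitaries on $L_2(G)$ implementing the action of the relevant compact group, and $\nu_s$ is the image of its Haar measure). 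Fixing a representative $g$ of the orbit $s$, one then gets
\[
\lambda_G(\nu_s)\otimes\lambda_G(\nu_s)=\int_{O_s}\!\int_{O_s}(\theta_h\otimes\theta_{h'})\bigl(\lambda_G(g)\otimes\lambda_G(g)\bigr)(\theta_h\otimes\theta_{h'})^*\,d\nu_s(h)\,d\nu_s(h'),
\]
which exhibits $g\mapsto\lambda_G(\nu_{\varphi(g)})\otimes\lambda_G(\nu_{\varphi(g)})$ as an integral of unitary conjugates of the completely positive definite function $g\mapsto\lambda_G(g)\otimes\lambda_G(g)$, hence completely positive definite; equivalently $\Psi(\tilde f)=\int_Q\tilde f(s)\lambda_G(\nu_s)^{\otimes2}\,d\tilde m(s)$ is an integral of conjugates of the completely positive map $(\lambda_G\otimes\lambda_G)\circ\delta_G\circ\iota$. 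Conjugating back by $P\otimes P$ yields that $\Phi$ is completely positive definite, which by the first paragraph gives the asserted complete positivity of $(\lambda_Q\otimes\lambda_Q)\circ\delta$; combined with \cite{ChKP2015} this yields the Banach algebra structure on $\Fs(Q)$.
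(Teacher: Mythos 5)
Your setup matches the skeleton of the paper's argument: you invoke Takesaki's criterion (\ref{eq:Takesaki-complete-positivity}), identify $L_1(\tilde m)$ and $L_2(Q)$ with $P(L_1(G,m))$ and $P(L_2(G,m))$, use the conditions (\ref{eq:conditions_P}) to transfer the convolution to $G$ (your observation that $P^*$ is then multiplicative and $*$-preserving is correct and is essentially the paper's relations (\ref{eq:image-of-L(Q)})), and you correctly isolate the crux: passing from $\lambda_G(\nu_{\varphi(g)})\otimes\lambda_G(\nu_{\varphi(g)})$, with $\nu_s=P^*(\varepsilon_s)$, to $\lambda_G(g)\otimes\lambda_G(g)$. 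But your resolution of that crux is a genuine gap. You appeal to an orbital-morphism picture in which the fibres $O_s$ are orbits of a group action and the unitaries $\lambda_G(h)$, $h\in O_s$, are mutually conjugate via a measurable field $\{\theta_h\}$. None of this is among the hypotheses: the theorem assumes only (\ref{eq:conditions_P}) and the positivity conditions of Proposition~\ref{P:preparatory}~(iii), and the introduction explicitly says the construction only \emph{resembles} an orbital morphism. Moreover, even granting such a field, your displayed identity does not yield complete positive definiteness: in $\int\!\!\int(\theta_h\otimes\theta_{h'})\,\lambda_G(g)^{\otimes 2}\,(\theta_h\otimes\theta_{h'})^*\,d\nu_{\varphi(g)}(h)\,d\nu_{\varphi(g)}(h')$ both the conjugating unitaries and the averaging measure vary with the point $g$ at which the function is evaluated, so this is not an integral of \emph{fixed} unitary conjugates of a c.p.d.\ function, and the inference ``hence completely positive definite'' fails. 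Your own remark that averages of c.p.d.\ functions need not be c.p.d.\ is precisely the objection to this step.

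The correct resolution --- the one the paper uses, silently, in its chain of equalities --- is far more elementary and needs no extra structure. Since the test vectors $F_i$ lie in the range of the orthogonal projection $P\otimes P$, one has $\langle\lambda_G(h)\otimes\lambda_G(h')F_i,F_j\rangle=\langle\bigl(P\lambda_G(h)P\otimes P\lambda_G(h')P\bigr)F_i,F_j\rangle$; and the identity $(\id\times P)\circ\Delta\circ P=(P\times P)\circ\Delta$ from (\ref{eq:conditions_P}) forces $P\lambda_G(h)P$ to depend on $h$ only through $\varphi(h)$. Hence this matrix coefficient is constant on $O_s\times O_s$, and integrating against the probability measure $\nu_s\otimes\nu_s$ gives $\langle\lambda_G(\nu_s)^{\otimes 2}F_i,F_j\rangle=\langle\lambda_G(g)^{\otimes 2}F_i,F_j\rangle$ for any $g\in O_s$. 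This is exactly the paper's replacement of $\lambda_G(P^*(\varepsilon_{\varphi(p)}))$ by $\lambda_G(p)$ inside the integral against $(f_j^\star*f_i)\,dm$. After that, left invariance of $m$ and the fact that $p\mapsto\lambda_G(p)\otimes\lambda_G(p)$ is a unitary representation allow one to complete the square and exhibit the quadratic form as $\bigl\|\sum_i\int_G f_i(p)\,\lambda_G(p)^{\otimes 2}\,dm(p)\,F_i\bigr\|^2\geq 0$. If you replace your fourth step by this observation, the rest of your outline goes through and coincides with the paper's proof.
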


\begin{proof}
  We will identify $L_2(Q)$ with the closed subspace $P(L_2(G))$ of
  $L_2(G)$, and the Banach $*$-algebra $L_1(Q,\tilde m)$ with the
  Banach $*$-subalgebra $P(L_1(G, m))$ of $L_1(G,m)$. With
  such an identification, we have
  \begin{equation}\label{eq:image-of-L(Q)}
    \begin{array}{c}
      (\tilde f, \tilde g)_{L_2(Q)} = (\tilde f, \tilde g)_{L_2(G)},
      \quad \tilde f, \tilde g \in L_2(Q),\\[2mm]
      \lambda_Q(\tilde \mu)\act \tilde f=
      \lambda_G(P^*(\tilde \mu))\act \tilde f,
      \quad \tilde \mu \in \meas_b(Q),\quad \tilde f \in L_2(Q),\\[2mm]
      \delta(P^*(\varepsilon_s))=P^*(\varepsilon_s)\otimes P^*(\varepsilon_s),
      \quad s \in Q.
    \end{array}
  \end{equation}
\iffalse
Therefore
  \begin{eqnarray*}
(\lambda_q f)(p) &=& (\tilde\Delta f)(q^\ast, p) = (P\otimes \mbox{id} ) (\Delta f)(g^{-1}, h)\\
 &=& \int_G  \left(\lambda_G (g) f \right)(h)  d\mu_{\varphi(g)} = (\lambda_G (\mu_{\varphi(g)}) f)(h),
\end{eqnarray*}
where $\varphi (g) = q$, $\varphi (h) = p$ and $\lambda_G$ is the left regular representation of $G$ on $L_2(G)$.

For any $\mu\in L_1(Q, \tilde m)$ let $\tilde \mu\in L_1(G, m)$ be such that $\tilde\mu = P' \mu$, i. e. if $\mu = f \tilde m$,
where $f\in L_1(Q,\tilde m)$, then $\tilde \mu = f m$, where $f\in PL_1(G, m)$.
Then we have
\begin{eqnarray*}
\lambda_Q(\mu)\act f &=& \int_Q (\lambda_Q (p) f) d\mu (q)\\
&=& \int_G \lambda_G (\mu_{\varphi (g)}) d\tilde\mu (g),
\end{eqnarray*}
where $\lambda_G$ is the left regular representation of $G$ on $L_2(G)$.

 Since $(P'\otimes P')\circ \delta ( \varepsilon_p) = (P'\otimes P')(\varepsilon_p\otimes\varepsilon_p) = \mu_p \otimes \mu_p$, thus, for any $\mu\in L_1(\tilde m)$ and $\tilde \mu\in L_1(m)$ such that $\tilde \mu = P'\mu$, we have
  \begin{eqnarray*}%\label{eq:expact-proof-delta}
    (\lambda_Q\otimes \lambda_Q)\circ \delta(\mu)&=&
    \int_G (\lambda_G \otimes \lambda_G )\circ (P'\otimes P')\delta(\varepsilon_{\varphi(g)}) d\tilde \mu(g)\\
    &=&\int_G
    \lambda_G (\mu_{\varphi(g)})
    \otimes\lambda_G (\mu_{\varphi(g)})\, d\tilde \mu(g).
  \end{eqnarray*}

\fi
To prove the theorem, using (\ref{eq:Takesaki-complete-positivity}), it is sufficient to show that
  \begin{align*}
%    \label{eq:double-coset-proof-1}
    \sum_{i,j=1}^n&
    \inner{A^*_j\cdot (\lambda_Q \otimes \lambda_Q)\circ
      \delta (\tilde \mu_j^\star *\tilde \mu_i)\cdot A_i\act F}
    {F}_{L_2(Q)\otimes L_2(Q)}\\
    &=
    \sum_{i,j=1}^n
        \inner{ (\lambda_Q \otimes \lambda_Q)\circ
      \delta (\tilde \mu_j^\star *\tilde \mu_i)\cdot A_i\act F}
    {A_j\act F}_{L_2(Q)\otimes L_2(Q)}\geq 0
  \end{align*}
  for any $A_i \in B(L_2(Q)\otimes L_2(Q))$, $\tilde \mu_i \in L_1(\tilde m)$, $F\in
  L_2(Q)\otimes L_2(Q)$, $i=1,\dots, n$.

  Hence letting $A_i F=F_i \in L_2(Q)\otimes L_2(Q)$, and $
  \mu_i=P^*(\tilde \mu_i)\in L_1(m_G)$, and
  using~(\ref{eq:image-of-L(Q)}), we have
  \begin{align*}
    \sum_{i,j=1}^n& \inner{ (\lambda_Q \otimes \lambda_Q)\circ \delta
      (\mu_j^\star *\mu_i)\act F_i}
    { F_j}_{L_2(Q)\otimes L_2(Q)}\\
    &=\sum_{i,j=1}^n \int_G
    \inner{(\lambda_G(P^*(\varepsilon_{\varphi(p)}))\otimes
      \lambda_G (P^*(\varepsilon_{\varphi(p)}))\act F_i}{F_j}_{L_2(G)\otimes L_2(G)}\\
    &\qquad\qquad d({\mu_j}^\star \ast \mu_i )(p).
  \end{align*}

  Let $\mu_i = f_i m$, $f_i\in P(L_1(G))\subset L_1(G)$, $i = 1,\dots
  , n$. Then $\mu_j^\ast = f^\ast_j m$ and
  $$
 {\mu_j}^\star \ast  \mu_i = (f_j^\ast \ast f_i) m.
  $$
  Using left invariance of the measure $m$ we thus have
  \begin{align*}
    \sum_{i,j=1}^n&
    \inner{ (\lambda_Q \otimes \lambda_Q)\circ
      \delta (\mu_j^\star *\mu_i)\act F_i}
    { F_j}_{L_2(Q)\otimes L_2(Q)}\\
    &=\sum_{i,j=1}^n \int_G
    \inner{(\lambda_G(P^*(\varepsilon_{\varphi(p)}))\otimes
      \lambda_G(P^*(\varepsilon_{\varphi(p)})))\act F_i}{F_j}_{L_2(G)\otimes L_2(G)}\\
    &\qquad\qquad \cdot (f_j^\star*f_i)(p)\, dm(p)\\
    &=\sum_{i,j=1}^n \int_{G^2}
    \inner{(\lambda_G(P^*(\varepsilon_{\varphi(p)}))\otimes
      \lambda_G(P^*(\varepsilon_{\varphi(p)})))\act F_i}{F_j}_{L_2(G)\otimes L_2(G)}\\
    &\qquad\qquad \cdot f^*_j(q) f_i(q^{-1}p)\, dm(q)dm(p)\\
    &=\sum_{i,j=1}^n \int_{G^2}
    \inner{(\lambda_G(p)\otimes
      \lambda_G(p))\act F_i}{F_j}_{L_2(G)\otimes L_2(G)}\\
    &\qquad\qquad \cdot f^*_j(q) f_i(q^{-1}p)\, dm(q)dm(p)\\
    &=\sum_{i,j=1}^n \int_{G^2}
    \inner{(\lambda_G(qp)\otimes
      \lambda_G(qp))\act F_i}{F_j}_{L_2(G)\otimes L_2(G)}\\
    &\qquad\qquad \cdot f^*_j(q) f_i(p)\, dm_G(q)dm_G(p)\\
    &=\sum_{i,j=1}^n \int_{G^2}
    \inner{(\lambda_G(p)\otimes
      \lambda_G(p))\act F_i}
    {(\lambda_G(q^{-1})
      \otimes \lambda_G(q^{-1})\act F_j}_{L_2(G)\otimes L_2(G)}\\
    &\qquad\qquad \cdot f^*_j(q) f_i(p)\, dm_G(q)dm_G(p)\\
    &=\sum_{i,j=1}^n \int_{G^2}
    \inner{(\lambda_G(p)\otimes
      \lambda_G(p))\act F_i}
    {(\lambda_G(q^{-1})
      \otimes \lambda_G(q^{-1})\act F_j}_{L_2(G)\otimes L_2(G)}\\
    &\qquad\qquad \cdot \overline f_j(q^{-1})
    f_i(p)\, dm_G(q^{-1})dm_G(p)\\
    % &=\sum_{i,j=1}^n \int_{G^2}
    % \inner{(\lambda_G(g)\otimes
    %   \lambda_G(g))\act F_i}
    % {(\lambda_G (p)
    %   \otimes \lambda_G (p)\act F_j}_{L_2(G)\otimes L_2(G)}\\
    % &\qquad\qquad \cdot \overline f_j(p)
    % f_i(g)\, dm_G(p)dm_G(g)\\
    &=
    \Bigl\|\sum_{i=1}^n \int_G f_i(p) (\lambda_G (p)\otimes
    \lambda_G (p))\, dm_G(p) \act F_i\Bigr\|^2_{L_2(G)\otimes L_2(G)}
    \geq 0.
  \end{align*}
\end{proof}

\begin{corollary}
  The Fourier space $\Fs(Q)$ is a Banach algebra.
\end{corollary}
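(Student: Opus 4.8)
The plan is to obtain the corollary as a direct consequence of Theorem~\ref{T:complete-positivity} together with the sufficient condition for a hypergroup to carry a Fourier Banach algebra established in~\cite{ChKP2015}. That condition, in the notation of the present paper, is exactly the complete positivity of the map $(\lambda_Q\otimes\lambda_Q)\circ\delta\colon L_1(\tilde m)\to B(L_2(Q)\otimes L_2(Q))$: once this holds, $\Fs(Q)$ is closed under pointwise multiplication of functions on $Q$ and becomes a commutative Banach algebra. Since Theorem~\ref{T:complete-positivity} proves precisely this complete positivity for the hypergroup $Q$ constructed from a conditional expectation, the corollary follows; so the body of the proof consists of recalling why complete positivity of $(\lambda_Q\otimes\lambda_Q)\circ\delta$ forces the algebra structure.

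To spell this out, I would first recall from Definition~\ref{def_Four_alg} that $\Fs(Q)$ is the closure in $\FSs_\lambda(Q)$ of the linear span of the functions $f\ast f^\dagger$, $f\in\fcc(Q)$, and that, by polarization, every element of this span is a finite linear combination of coefficients $p\mapsto\inner{\lambda_Q(p)\xi}{\eta}_{L_2(Q)}$ of the left regular representation with $\xi,\eta\in L_2(Q)$, the norm in $\FSs_\lambda(Q)$ being the infimum of $\|\xi\|_{L_2(Q)}\|\eta\|_{L_2(Q)}$ over all such presentations. For two such coefficients $u(p)=\inner{\lambda_Q(p)\xi}{\eta}_{L_2(Q)}$ and $v(p)=\inner{\lambda_Q(p)\xi'}{\eta'}_{L_2(Q)}$ I would then note that
\begin{equation*}
  (uv)(p)=\inner{(\lambda_Q(p)\otimes\lambda_Q(p))(\xi\otimes\xi')}{\eta\otimes\eta'}_{L_2(Q)\otimes L_2(Q)},
\end{equation*}
so that, for $\tilde\mu\in L_1(\tilde m)$,
\begin{equation*}
  \int_Q (uv)(p)\,d\tilde\mu(p)
  =\inner{\bigl((\lambda_Q\otimes\lambda_Q)\circ\delta\bigr)(\tilde\mu)(\xi\otimes\xi')}{\eta\otimes\eta'}_{L_2(Q)\otimes L_2(Q)}.
\end{equation*}

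Next I would invoke Theorem~\ref{T:complete-positivity}: the map $(\lambda_Q\otimes\lambda_Q)\circ\delta$ is completely positive and, being unital (its extension to the multiplier algebra sends the unit to the identity operator), contractive for the reduced $C^*$-norm; by Stinespring's dilation theorem it therefore equals a $*$-representation $\rho$ of $C^*_r(Q)$ compressed to $L_2(Q)\otimes L_2(Q)$ by a contraction $V$. Since $\rho$ comes from $C^*_r(Q)$, it corresponds to a representation of $Q$ weakly contained in $\lambda_Q$, so $uv$ is the coefficient $\tilde\mu\mapsto\inner{\rho(\tilde\mu)V(\xi\otimes\xi')}{V(\eta\otimes\eta')}$ of a representation weakly contained in $\lambda_Q$; hence $uv\in\FSs_\lambda(Q)$ with $\|uv\|\le\|\xi\|\,\|\xi'\|\,\|\eta\|\,\|\eta'\|$, and taking infima over presentations gives $\|uv\|\le\|u\|\,\|v\|$. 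I would then observe that when $f,g\in\fcc(Q)$ the functions $f\ast f^\dagger$ and $g\ast g^\dagger$ have compact support, so does their product, and a compactly supported coefficient of a representation weakly contained in $\lambda_Q$ lies in $\Fs(Q)$ by the description of $\Fs(Q)$ in~\cite{ChKP2015}. Thus pointwise multiplication sends the dense subspace spanned by the $f\ast f^\dagger$ into $\Fs(Q)$ and is bounded there; extending it by continuity yields a bounded bilinear map $\Fs(Q)\times\Fs(Q)\to\Fs(Q)$, with commutativity and associativity inherited from the pointwise operations, so $\Fs(Q)$ is a commutative Banach algebra.

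The step I expect to be the real obstacle --- and the one that is genuinely the business of~\cite{ChKP2015} rather than a formality --- is that the product of two elements of $\Fs(Q)$ stays inside $\Fs(Q)$ and not merely inside the a priori larger space $\FSs_\lambda(Q)$. This hinges on the fact that $(\lambda_Q\otimes\lambda_Q)\circ\delta$ factors through the reduced $C^*$-algebra $C^*_r(Q)$, which is exactly what Theorem~\ref{T:complete-positivity} supplies, so that the Stinespring dilation $\rho$ is weakly contained in the left regular representation; this is the hypergroup substitute for Fell's absorption principle. Everything else is routine, and all the analytic work of the paper has already been done inside Theorem~\ref{T:complete-positivity}.
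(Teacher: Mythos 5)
Your proposal is correct and follows exactly the route the paper intends: the paper's own proof is a one-line appeal to the results of~\cite{ChKP2015}, whose sufficient condition is precisely the complete positivity of $(\lambda_Q\otimes\lambda_Q)\circ\delta$ established in Theorem~\ref{T:complete-positivity}. Your write-up simply unpacks what that citation contains (products of coefficients of $\lambda_Q$ as coefficients of the dilated representation weakly contained in $\lambda_Q$), so it is the same argument in expanded form.
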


\begin{proof}
The statement of this corollary follows from the results of \cite{ChKP2015}.
\end{proof}

%\bibliographystyle{unsrt}
%\bibliography{../../../bibl_s}

%%%%%%%%%%%%%%%%%%%%%%%%%  BEGIN Bibl from Tesko  25.11.2016 %%%%%%%%%%%%%%%%%%%%%

% \fi
\end{document}